\newcommand{\name}[2]{#2}
\newcommand{\irredabi}{$\{j\in D: a_j \neq b_j \} \subset \{j\in D : j \succ i\}$}
\begin{document}
\title[Badly approximable points on self-affine sponges]{Badly approximable points on self-affine sponges and the lower Assouad dimension}

\begin{Abstract}
We highlight a connection between Diophantine approximation and the lower Assouad dimension by using information about the latter to show that the Hausdorff dimension of the set of badly approximable points that lie in certain non-conformal fractals, known as self-affine sponges, is bounded below by the dynamical dimension of these fractals. In particular, for self-affine sponges with equal Hausdorff and dynamical dimensions, the set of badly approximable points has full Hausdorff dimension in the sponge. Our results, which are the first to advance beyond the conformal setting, encompass both the case of Sierpi\'nski sponges/carpets (also known as Bedford--McMullen sponges/carpets) and the case of Bara\'nski carpets. We use the fact that the lower Assouad dimension of a hyperplane diffuse set constitutes a lower bound for the Hausdorff dimension of the set of badly approximable points in that set.
\end{Abstract}
\authortushar\authorlior\authordavid\authormariusz


\maketitle

Fix $d\in\N$. Dirichlet's theorem in Diophantine approximation states that for all $\xx\in\R^d$, there exist infinitely many rational points $\pp/q\in\Q^d$ such that
\[
\left\|\xx - \frac{\pp}{q}\right\| < \frac{1}{q^{1 + 1/d}}\cdot
\]
A point $\xx\in\R^d$ is said to be \emph{badly approximable} if this inequality cannot be improved by more than a constant, i.e. if there exists a constant $c > 0$ such that for any rational point $\pp/q\in\Q^d$, we have
\[
\left\|\xx - \frac{\pp}{q}\right\|\geq \frac{c}{q^{1 + 1/d}}\cdot
\]
We denote the set of all badly approximable points in $\R^d$ by $\BA_d$. Dirichlet's theorem shows that in some sense, out of all the points in $\R^d$, badly approximable points are the hardest to approximate by rationals. It is well-known that $\BA_d$ is a Lebesgue nullset of full Hausdorff dimension in $\R^d$, see e.g. \cite[Chapter III]{Schmidt3}.

For more than a decade now, as part of the burgeoning study of Diophantine properties of fractal sets and measures \cite{KLW,EinsiedlerTseng,EFS,FishmanSimmons1,DFSU_GE1}, there has been a growing interest in computing the Hausdorff dimension of the intersection of $\BA_d$ with various fractal sets. Since $\BA_d$ has full dimension, one expects its intersection with any fractal set $J \subset \R^d$ to have the same dimension as $J$, and this can be proven for certain broad classes of fractal sets $J$, see e.g. \cite{BFKRW,DFSU_BA_conformal,Fishman,KleinbockWeiss1}.

However, progress so far has been limited to the class of fractals defined by conformal dynamical systems, and it has been a natural challenge to understand what happens beyond this case. Non-conformal dynamical systems (where the system is expanding but may have different rates of expansion in different directions) are often much more complicated than conformal ones, which can often be thought of as essentially the same as one-dimensional systems. For instance, the Hausdorff dimension of any conformal expanding repeller can be computed via Bowen's formula (e.g. \cite[Corollary 9.1.7]{PrzytyckiUrbanski}), but it is far more difficult to compute the Hausdorff dimension of even relatively simple non-conformal fractals, such as the limit sets/measures of affine iteration function systems (IFSes) satisfying the open set condition. To make progress one generally has to assume either some randomness in the contractions defining the IFS, as in \cite{Falconer2,Kaenmaki2}, or some special relations between these contractions, as in \cite{Baranski,DasSimmons1}. An exception to this is a recent theorem of \name{Bal\'asz}{B\'ar\'any} and \name{Antti}{K\"aenm\"aki} \cite{BaranyKaenmaki}, who showed that every self-affine measure on the plane is exact dimensional.

In this paper we will concentrate on the latter situation, considering the class of \emph{self-affine sponges}, and in particular analyzing the Hausdorff dimension of the intersection of a self-affine sponge with the set of badly approximable points. The class of self-affine sponges is the generalization to higher dimensions of the class of \emph{self-affine carpets}, which consists of subsets of $\R^2$ defined according to a certain recursive construction where each rectangle in the construction is replaced by the union of several rectangles contained in that rectangle (cf. Definition \ref{definitionsponges} below). The Hausdorff dimension of certain self-affine carpets was computed independently by \name{Tim}{Bedford} \cite{Bedford} and \name{Curt}{McMullen} \cite{McMullen_carpets}, and their results were extended by several authors \cite{LalleyGatzouras, KenyonPeres, Baranski, DasSimmons1}.

Given a self-affine sponge, we would like to know what the Hausdorff dimension of its intersection with $\BA_d$ is. There are two subtleties that make this question more difficult to answer than in the conformal case. One involves the question of what hypotheses are sufficient to deduce that a self-affine sponge intersects $\BA_d$ nontrivially. In the case of self-conformal sets, the answer has always turned out to be an irreducibility assumption: in the most general case, that the set in question is not contained in any real-analytic manifold of dimension strictly less than $d$ (see \cite{DFSU_BA_conformal}). This assumption is natural because of a well-known obstruction: any point contained in a rational affine hyperplane cannot be badly approximable, and thus any set that intersects $\BA_d$ cannot be contained in a rational affine hyperplane. Strengthening this requirement from rational hyperplanes to all hyperplanes, and then from hyperplanes to real-analytic manifolds, is natural from a geometric point of view. However, in the case of self-affine sponges the irreducibility assumption needs to be stronger than the condition of not being contained in a manifold; in the next section we will say precisely what assumption is needed (see Definition \ref{definitionirreducible}). Our irreducibility assumption is satisfied in ``most'' examples, and we can show that standard techniques (i.e. Schmidt's game and hyperplane diffuseness) must fail for sponges that are not irreducible in our sense (see Proposition \ref{propositionnoHDsubsets}).

The other subtlety is that self-affine sponges may have no natural measure of full dimension, as recently discovered by two of the authors \cite{DasSimmons1}. For such sponges, our techniques cannot prove that the intersection of the sponge with $\BA_d$ has full dimension in the sponge, but only that the dimension of this intersection is bounded below by the \emph{dynamical dimension} of the sponge, i.e. the supremum of the dimensions of the invariant measures. Regarding this difficulty, we leave open the possibility that it may still be possible to prove the full Hausdorff dimension of the sponge's intersection with $\BA_d$ using Schmidt's game and hyperplane diffuseness, but new ideas would be needed. This problem as well as a few other open problems are listed at the end of the paper.\\

{\bf Acknowledgements.} The first-named author was supported in part by a 2016-2017 Faculty Research Grant from the University of Wisconsin--La Crosse. The second-named author was supported in part by the Simons Foundation grant \#245708. The third-named author was supported in part by the EPSRC Programme Grant EP/J018260/1. The fourth-named author was supported in part by the NSF grant DMS-1361677. The authors thank the anonymous referee for helpful comments.

\section{Main results}

We use the same notation to describe self-affine sponges as in \cite{DasSimmons1}:

\begin{definition}[{\cite[Definitions 2.1 and 2.2]{DasSimmons1}}]
\label{definitionsponges}
Fix $d \geq 1$, and let $D = \{1,\ldots,d\}$. For each $i \in D$, let $A_i$ be a finite index set, and let $\Phi_i = (\phi_{i,a})_{a\in A_i}$ be a finite collection of contracting similarities of $[0,1]$, called the \emph{base IFS in coordinate $i$}. (Here IFS is short for \emph{iterated function system}.) Let $A = \prod_{i\in D} A_i$, and for each $\aa = (a_1,\ldots,a_d) \in A$, consider the contracting affine map $\phi_\aa : [0,1]^d \to [0,1]^d$ defined by the formula
\begin{equation*}
\phi_\aa(x_1,\ldots,x_d) = (\phi_{\aa,1}(x_1),\ldots,\phi_{\aa,d}(x_d)),
\end{equation*}
where $\phi_{\aa,i}$ is shorthand for $\phi_{i,a_i}$ in the formula above, as well as elsewhere. Geometrically, $\phi_\aa$ can be thought of as corresponding to the rectangle to which it sends $[0,1]^d$:
\[
\phi_\aa([0,1]^d) = \prod_{i\in D} \phi_{\aa,i}([0,1]) \subset [0,1]^d .
\]
Given $E \subset A$, we call the collection $\Phi \df (\phi_\aa)_{\aa\in E}$ a \emph{diagonal IFS}. It is a special case of the more general notion of an \emph{affine IFS}. The \emph{coding map} of $\Phi$ is the map $\pi:E^\N \to [0,1]^d$ defined by the formula
\[
\pi(\omega) = \lim_{n\to\infty} \phi_{\omega\given n}(\0),
\]
where $\phi_{\omega\given n} \df \phi_{\omega_1}\circ\cdots\circ\phi_{\omega_n}$. Finally, the \emph{limit set} of $\Phi$ is the set $\Lambda_\Phi \df \pi(E^\N)$. We call the limit set of a diagonal IFS a \emph{self-affine sponge}. If $d = 2$, the limit set is also called a \emph{self-affine carpet}.

The sponge $\Lambda_\Phi$ is called \emph{Bara\'nski} (resp. \emph{strongly Bara\'nski}) if the base IFSes all satisfy the open set condition (resp. the strong separation condition) with respect to the interval $\I = (0,1)$ (resp. $\I = [0,1]$), i.e. if for all $i\in D$, the collection
\[
\big(\phi_{i,a}(\I)\big)_{a\in A_i}
\]
is disjoint.
\end{definition}

We now define the notion of irreducibility that we need in order to state our theorem:

\begin{definition}
\label{definitionirreducible}
Define a partial order $\prec$ on $D$ by writing $i \prec j$ if $|\phi_{\aa,i}'| \geq |\phi_{\aa,j}'|$ for all $\aa\in E$. In other words, $i \prec j$ if all contractions of $\Phi$ contract at least as fast in coordinate $j$ as in coordinate $i$. The sponge $\Lambda_\Phi$ is said to be \emph{irreducible} if for all $i\in D$, there exist $\aa,\bb\in E$ such that $a_i \neq b_i$ but \irredabi. In other words, $\Phi$ is irreducible if for every coordinate, $\Phi$ contains two contractions that can be distinguished in that coordinate but not in any coordinate that allows for slower contraction than in the original coordinate.

The sponge $\Lambda_\Phi$ is said to have \emph{distinguishable coordinates} if for all $i,j\in D$ with $i\neq j$, there exists $\aa\in E$ such that $|\phi_{\aa,i}'| \neq |\phi_{\aa,j}'|$. Note that in two dimensions, a carpet has distinguishable coordinates if and only if its IFS does not consist entirely of similarities.
\end{definition}

%

\begin{example}
\label{examplecarpet}
Let $d = 2$, $m_1 = 3$, and $m_2 = 2$, and consider the base IFSes
\begin{align*}
\Phi_i &= (\phi_{i,a})_{0 \leq a \leq m_i - 1},&
\phi_{i,a}(x) &= \frac{a + x}{m_i}\cdot
\end{align*}
The product IFS consists of the affine contractions
\[
\phi_\aa(x_1,x_2) = \left(\frac{a_1 + x_1}{3},\frac{a_2 + x_2}{2}\right), \;\; \aa\in A = \{0,1,2\}\times\{0,1\}.
\]
Let $E^{(1)} = \{(0,0),(1,1),(2,0)\} \subset A$ and $E^{(2)} = \{(0,0),(2,1)\} \subset A$, and consider the diagonal IFSes $\Phi^{(k)} = (\phi_\aa)_{\aa\in E^{(k)}}$ ($k = 1,2$). (Cf. Figure \ref{figurecarpet}.) The IFS $\Phi^{(1)}$ is irreducible, since for $i = 1$ we can take $\aa = (0,0)$ and $\bb = (2,0)$ and for $i = 2$ we can take $\aa = (0,0)$ and $\bb = (1,1)$. On the other hand, the IFS $\Phi^{(2)}$ is reducible, since if $i = 1$, then there do not exist $\aa,\bb\in E^{(2)}$ such that $a_i \neq b_i$ and \irredabi. Both of these IFSes have distinguishable coordinates.

We note that although $\Phi^{(2)}$ is reducible, its limit set is not contained in any line or smooth curve in $\R^2$. This contrasts with the case of limit sets of conformal IFSes, where a set is defined to be irreducible if it is not contained in any real-analytic manifold of dimension strictly smaller than the ambient dimension. The reason that we call $\Phi^{(2)}$ reducible is that its limit set does not have any hyperplane diffuse subsets, meaning that Schmidt's game cannot be used to deduce lower bounds on the dimension of its intersection with $\BA_d$; see Section \ref{sectiongames} and Proposition \ref{propositionnoHDsubsets} for details.
\end{example}

\begin{figure}
\scalebox{0.5}{
\begin{tikzpicture}[line cap=round,line join=rounr]
\clip(-1,-1) rectangle (7,7);
\fill[gray] (0,0) -- (0,3) -- (2,3) -- (2,0) -- cycle;
\fill[gray] (2,3) -- (2,6) -- (4,6) -- (4,3) -- cycle;
\fill[gray] (4,0) -- (4,3) -- (6,3) -- (6,0) -- cycle;
\draw (0,0)-- (0,6);
\draw (2,0)-- (2,6);
\draw (4,0)-- (4,6);
\draw (6,0)-- (6,6);
\draw (0,0)-- (6,0);
\draw (0,3)-- (6,3);
\draw (0,6)-- (6,6);
\end{tikzpicture}}
\scalebox{0.5}{
\begin{tikzpicture}[line cap=round,line join=rounr]
\clip(-1,-1) rectangle (7,7);
\fill[gray] (0,0) -- (0,3) -- (2,3) -- (2,0) -- cycle;
\fill[gray] (4,3) -- (4,6) -- (6,6) -- (6,3) -- cycle;
\draw (0,0)-- (0,6);
\draw (2,0)-- (2,6);
\draw (4,0)-- (4,6);
\draw (6,0)-- (6,6);
\draw (0,0)-- (6,0);
\draw (0,3)-- (6,3);
\draw (0,6)-- (6,6);
\end{tikzpicture}}
\caption{Generating templates for the IFSes $\Phi^{(1)}$ and $\Phi^{(2)}$ considered in Example \ref{examplecarpet}. The left-hand picture is irreducible but the right-hand picture is reducible. Both IFSes have distinguishable coordinates and are Bara\'nski but not strongly Bara\'nski.}
\label{figurecarpet}
\end{figure}
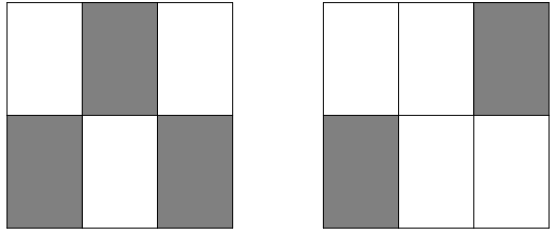

The last notion we need to define before we can state our theorem is the notion of the dynamical dimension of a self-affine sponge:

\begin{definition}[{\cite[Definition 2.6]{DasSimmons1}}]
\label{definitionDD}
The \emph{dynamical dimension} of a self-affine sponge $\Lambda_\Phi$ is the number
\[
\DynD(\Phi) \df \sup_\mu\{\HD(\pi_*[\mu])\},
\]
where the supremum is taken over all probability measures $\mu$ on $E^\N$ that are invariant under the shift map.\Footnote{By \cite[Theorem 2.7 and (2.13)]{DasSimmons1}, the dynamical dimension is the same if we take the supremum only over ergodic measures of positive entropy. This means that our definition agrees with the usual one in the literature e.g. \cite[(2.14)]{DenkerUrbanski2}.} Here, $\pi_*[\mu]$ denotes the pushforward of $\mu$ under the coding map $\pi$, and the \emph{Hausdorff dimension} of $\pi_*[\mu]$ is the infimum of the Hausdorff dimensions of sets that have full measure under $\pi_*[\mu]$.
\end{definition}

The dynamical dimension of a self-affine sponge is always bounded above by the Hausdorff dimension of the sponge. Equality holds in the case of Sierpi\'nski sponges (in which the coordinatewise rates of contraction are the same for all contractions in the IFS) \cite{KenyonPeres} and also in the case of Bara\'nski carpets (i.e. two-dimensional Bara\'nski sponges) \cite{Baranski}. In general, the dynamical dimension may be strictly less than the Hausdorff dimension, see \cite{DasSimmons1}; this is true even for three-dimensional Bara\'nski sponges satisfying the coordinate ordering condition (see Definition \ref{definitionLG} below).

We are now ready to state our main result:

\begin{theorem}
\label{maintheorem}
Let $\Lambda_\Phi \subset [0,1]^d$ be an irreducible Bara\'nski sponge with distinguishable coordinates. Then
\[
\HD(\Lambda_\Phi \cap \BA_d) \geq \DynD(\Lambda_\Phi).
\]
In particular, if $d = 2$ or if $\Lambda_\Phi$ is a Sierpi\'nski sponge, then $\Lambda_\Phi \cap \BA_d$ has full Hausdorff dimension in $\Lambda_\Phi$.
\end{theorem}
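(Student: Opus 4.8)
The plan is to deduce the theorem from a general principle connecting badly approximable sets with the lower Assouad dimension via Schmidt's game. Concretely, I would rely on the following chain: (i) there is a theorem (to be quoted or proved in the games section, and attributed to work on Schmidt's game on fractals) stating that if $K \subset \R^d$ is a closed hyperplane diffuse set, then $\HD(K \cap \BA_d) \geq \underline{\dim}_A(K)$, the lower Assouad dimension of $K$; (ii) consequently, it suffices to produce, for every $s < \DynD(\Lambda_\Phi)$, a closed subset $K \subset \Lambda_\Phi$ that is hyperplane diffuse and has $\underline{\dim}_A(K) \geq s$. The abstract principle reduces the whole problem to a construction internal to the sponge, and this is where irreducibility, the Bara\'nski condition, and distinguishable coordinates enter.

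For the construction of $K$, I would start from the definition of $\DynD(\Lambda_\Phi)$ as a supremum over shift-invariant measures $\mu$ on $E^\N$; fix an ergodic $\mu$ of positive entropy with $\HD(\pi_*[\mu])$ close to $\DynD(\Lambda_\Phi)$ (using the footnote to Definition \ref{definitionDD} to reduce to this case). The natural candidate for $K$ is a ``subsystem'' limit set: pass to a high iterate of $\Phi$, select a finite subalphabet of cylinders whose statistics approximate those of $\mu$ (a standard large-deviations / typical-word argument), and let $K$ be the limit set of the resulting sub-IFS. Such an IFS is homogeneous enough that its limit set is Ahlfors regular in each coordinate scale, so that its lower Assouad dimension equals its Hausdorff dimension, which can be made $\geq s$ by the approximation. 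The subtle point here — and I expect this to be the main obstacle — is arranging \textbf{hyperplane diffuseness} of $K$: one must ensure that at every location and every scale $\rho$, the set $K$ is not trapped near any affine hyperplane. This is exactly where the hypotheses bite. Distinguishable coordinates prevent the sponge from behaving conformally (which would allow it to concentrate near a line), and irreducibility — via the partial order $\prec$ and the requirement that in each coordinate there are two contractions distinguishable in that coordinate but not in any slower-contracting coordinate — is precisely what lets us, at the relevant ``square-ish'' scales where all coordinates have been subdivided comparably, find points of $K$ spread out in every coordinate direction simultaneously, thereby escaping any hyperplane. Making this quantitative (a uniform diffuseness constant independent of location and scale) requires carefully tracking which coordinates have been ``refreshed'' between consecutive comparable scales and invoking the $\prec$-structure so that a coordinate in which we want spreading has been subdivided at least as often as the coordinates already constraining us; this bookkeeping, together with the Bara\'nski separation to keep the pieces of $K$ genuinely apart, is the technical heart of the argument.

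Once $K$ is constructed with $\underline{\dim}_A(K) \geq s$ and hyperplane diffuseness established, applying principle (i) gives $\HD(\Lambda_\Phi \cap \BA_d) \geq \HD(K \cap \BA_d) \geq \underline{\dim}_A(K) \geq s$, and letting $s \uparrow \DynD(\Lambda_\Phi)$ yields the main inequality. For the ``in particular'' clause: when $d = 2$, Bara\'nski's theorem \cite{Baranski} gives $\DynD(\Lambda_\Phi) = \HD(\Lambda_\Phi)$, and when $\Lambda_\Phi$ is a Sierpi\'nski sponge, the theorem of Kenyon--Peres \cite{KenyonPeres} does the same; in both cases the inequality becomes $\HD(\Lambda_\Phi \cap \BA_d) \geq \HD(\Lambda_\Phi)$, and since the reverse inequality is trivial, $\Lambda_\Phi \cap \BA_d$ has full Hausdorff dimension in $\Lambda_\Phi$. (One should also note at the start that the hypotheses are inherited by the high-iterate IFS and by the chosen subalphabet, so that the construction stays within the class to which the diffuseness analysis applies.)
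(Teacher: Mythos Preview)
Your strategy is essentially the paper's own: Theorem \ref{maintheorem} is deduced from Theorem \ref{maintheorem3} together with Corollary \ref{corollaryBA}, by building for a Bernoulli measure $\pp$ of near-maximal dimension a sequence of strongly Lalley--Gatzouras subsponges $\Lambda_{\Psi_N}\subset\Lambda_\Phi$ (from $\pp$-typical length-$N$ words, pruned so as to be uniformly irreducible and hence hyperplane diffuse by Proposition \ref{propositiondiffuse}) with $\underline\AD(\Lambda_{\Psi_N})\to\HD(\nu_\pp)$. One correction: the subsponges are \emph{not} Ahlfors regular, so ``$\underline\AD=\HD$'' is not the mechanism; rather, the paper uses the explicit Lalley--Gatzouras formula $\underline\AD=\sum_i\min_{\aa}\dim(\Psi_{i,\aa})$ of Theorem \ref{theoremassdimLG}, and the homogeneity of typical words forces each minimum close to $\text{h}_\pp(I_{\leq i}\given I_{<i})/\chi_i(\pp)$, whose sum is $\HD(\nu_\pp)$ by Ledrappier--Young.
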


The idea of the proof can be described succinctly as follows. Let $\nu = \pi_*[\mu]$ be the image under the coding map of an ergodic shift-invariant probability measure $\mu$ on $E^\N$. Let $N$ be a large number, and let $F \subset E^N$ be a subset consisting of ``$\mu$-typical'' words. Then the limit set of the IFS $\Psi_F$ corresponding to $F$ can be shown to intersect $\BA_d$ in a set of dimension close to the dimension of $\nu$,  Roughly, this is because the elements of $\Psi_F$ are all relatively ``homogeneous'' and so the lower Assouad dimension of the limit set of $\Psi_F$, which is a lower bound for the dimension of its intersection with $\BA_d$ (see Proposition \ref{propositionfishman}), is close to the Hausdorff dimension of the limit set of $\Psi_F$, which is in turn close to the Hausdorff dimension of $\nu$ (computed using a Ledrappier--Young-type formula \cite[(2.13)]{DasSimmons1}). Finally, $\nu$ can be chosen so that its Hausdorff dimension is close to the dynamical dimension of $\Lambda_\Phi$.

Actually, we do not need to deal with all possible ergodic shift-invariant measures; it suffices to consider the smaller class of Bernoulli measures. A \emph{Bernoulli measure} is a measure of the form $\nu_\pp = \pi_*[\pp^\N]$, where $\pp$ is a probability measure on $E$. In \cite[Theorem 2.7]{DasSimmons1}, it was shown that the supremum of the Hausdorff dimensions of the Bernoulli measures is equal to the dynamical dimension, so in the above proof sketch $\nu$ can be assumed to be a Bernoulli measure. The function sending a Bernoulli measure to its Hausdorff dimension is continuous \cite[Theorem 2.9]{DasSimmons1}, so by compactness there exists a (not necessarily unique) Bernoulli measure whose Hausdorff dimension is equal to the dynamical dimension.

If $\nu_\pp$ is a Bernoulli measure of maximal dimension, then to show that the conclusion of Theorem \ref{maintheorem} holds it suffices to check that the above proof sketch can be made rigorous for the measure $\nu = \nu_\pp$. It turns out that the conditions under which this is possible are more general than the hypotheses of Theorem \ref{maintheorem}. To make this statement precise, we introduce a ``local'' analogue of the partial order $\prec$ considered in Definition \ref{definitionirreducible}:

\begin{definition}
\label{definitionmuirreducible}
Let $\pp$ be a probability measure on $E$, and for each $i \in D$ we define the \emph{Lyapunov exponent of $\pp$ in coordinate $i$} to be the number
\[
\chi_i(\pp) = -\int \log|\phi_{\aa,i}'| \;\dee\pp(\aa).
\]
Define a partial order $\prec_\pp$ on $D$ by writing $i \prec_\pp j$ if $\chi_i(\pp) \leq \chi_j(\pp)$.  The measure $\pp$ is said to be \emph{irreducible} (or equivalently, the sponge $\Lambda_\Phi$ is said to be \emph{irreducible with respect to $\pp$}) if for all $i\in D$, there exist $\aa,\bb\in E$ such that $a_i \neq b_i$ but $\{j\in D: a_j \neq b_j \} \subset \{j\in D : j \succ_\pp i\}$. Note that $\prec_\pp$ is a finer partial order than $\prec$, so if a sponge is irreducible in the sense of Definition \ref{definitionirreducible} then it is irreducible with respect to every probability measure on $E$.

The measure $\pp$ is said to have \emph{distinct Lyapunov exponents} if the numbers $\chi_i(\pp)$ ($i\in D$) are all distinct.
\end{definition}


We also introduce a ``local'' analogue of the Bara\'nski condition:

\begin{definition}[Cf. {\cite[Definition 3.1]{DasSimmons1}}]
\label{definitiongood}
Let $\Lambda_\Phi$ be a self-affine sponge, and let $I \subset D$ be a coordinate set. Let
\[
\Phi_I = (\phi_{I,\aa})_{\aa\in \pi_I(E)},
\]
where $\phi_{I,\aa}:[0,1]^I\to [0,1]^I$ is defined by the formula
\[
\phi_{I,\aa}(\xx) = \big(\phi_{\aa,i}(x_i)\big)_{i\in I}
\]
and $\pi_I:A\to A_I \df \prod_{i\in I} A_i$ is the projection map. We call $I$ \emph{good} (resp. \emph{strongly good}) if the collection
\[
\big(\phi_{I,\aa}(\I^I)\big)_{\aa\in \pi_I(E)}
\]
is disjoint, where $\I = (0,1)$ (resp. $\I = [0,1]$). Also, a measure $\pp$ on $E$ is called \emph{good} (resp. \emph{strongly good}) if for every $x > 0$, the set
\[
I(\pp,x) = \{i\in D : \chi_i(\pp) \leq x\}
\]
is good (resp. strongly good).
\end{definition}

We can now state a ``local'' version of Theorem \ref{maintheorem}:

\begin{theorem}
\label{maintheorem2}
Let $\Lambda_\Phi \subset [0,1]^d$ be a self-affine sponge, and let $\pp$ be an irreducible good probability measure on $E$ with distinct Lyapunov exponents. Then
\[
\HD(\Lambda_\Phi\cap \BA_d) \geq \HD(\nu_\pp).
\]
In particular, if $\HD(\nu_\pp) = \HD(\Lambda_\Phi)$, then $\Lambda_\Phi\cap \BA_d$ has full Hausdorff dimension in $\Lambda_\Phi$.
\end{theorem}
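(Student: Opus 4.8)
The plan is to fix $\varepsilon > 0$ and construct a finite diagonal sub-IFS $\Psi_F = (\phi_\omega)_{\omega \in F}$, where $F \subset E^N$ for some large $N$ and $\phi_\omega \df \phi_{\omega_1} \circ \cdots \circ \phi_{\omega_N}$, whose limit set $\Lambda_{\Psi_F} \subset \Lambda_\Phi$ is hyperplane diffuse and satisfies $\underline\dim_A(\Lambda_{\Psi_F}) \geq \HD(\nu_\pp) - \varepsilon$, where $\underline\dim_A$ denotes the lower Assouad dimension. Granting this, Proposition~\ref{propositionfishman} gives $\HD(\Lambda_\Phi \cap \BA_d) \geq \HD(\Lambda_{\Psi_F} \cap \BA_d) \geq \underline\dim_A(\Lambda_{\Psi_F}) \geq \HD(\nu_\pp) - \varepsilon$, and letting $\varepsilon \to 0$ proves the main inequality. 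The ``in particular'' clause then follows at once, since $\HD(\Lambda_\Phi \cap \BA_d) \leq \HD(\Lambda_\Phi)$ always holds.

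To build $F$, first reorder the coordinates so that $\chi_1(\pp) < \cdots < \chi_d(\pp)$, which is possible because $\pp$ has distinct Lyapunov exponents. For $N$ large, take $F$ to consist of $\pp$-typical words $\omega \in E^N$, i.e.\ words for which $|\phi_{\omega,i}'| = e^{-N(\chi_i(\pp) \pm \varepsilon)}$ for each $i$, $\pp^N([\omega]) = e^{-N(H(\pp) \pm \varepsilon)}$, and the like; by the law of large numbers these account for all but an exponentially small proportion of $E^N$. Then prune $F$ further so that (i) it has \emph{uniform fibres}: for each $k$, every element of $\pi_{\{1,\ldots,k\}}(F)$ has, up to a factor $e^{\pm N\varepsilon}$, the same number of extensions to $\pi_{\{1,\ldots,k+1\}}(F)$, and likewise at deeper levels; and (ii) for each coordinate $i$, a pair $\aa, \bb \in E$ witnessing the irreducibility of $\pp$ at $i$ occurs as a block inside every $\omega \in F$ (possible after enlarging $N$). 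Because $\pp$ is good, for each $k$ the set $\{1, \ldots, k\} = I(\pp, \chi_k(\pp))$ is good, so the rectangles $\phi_{\{1,\ldots,k\},\omega}(\I^{\{1,\ldots,k\}})$, $\omega \in \pi_{\{1,\ldots,k\}}(F)$, are pairwise disjoint; hence $\Psi_F$ and all its iterates satisfy the open set condition, with the additional coordinatewise separation that the later steps require.

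Next I would compute the two dimensions. The generating maps of $\Psi_F$ all contract by $e^{-N(\chi_i(\pp) \pm \varepsilon)}$ in coordinate $i$, so $\Lambda_{\Psi_F}$ is, up to multiplicative errors $e^{\pm N\varepsilon}$, a Sierpi\'nski sponge with uniform fibres: its coordinatewise contraction ratios are essentially constant and --- by the distinctness of the $\chi_i(\pp)$ --- well separated across coordinates. For Sierpi\'nski sponges with uniform fibres the box, Hausdorff, and lower Assouad dimensions coincide, so propagating the $e^{\pm N\varepsilon}$ errors through the counting argument that bounds $\underline\dim_A$ from below (count, inside an arbitrary $\Psi_F$-cylinder and down to an arbitrary smaller scale, the uniformly many sub-cylinders created at each level; goodness ensures these are disjoint, so the count reflects dimension) gives $\underline\dim_A(\Lambda_{\Psi_F}) \geq \HD(\Lambda_{\Psi_F}) - C\varepsilon$. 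On the other hand, applying the Ledrappier--Young-type formula \cite[(2.13)]{DasSimmons1} to the uniform Bernoulli measure on $F^\N$ shows that $\HD(\Lambda_{\Psi_F})$ differs by $O(\varepsilon)$ from the Ledrappier--Young expression for $\HD(\nu_\pp)$. Combining these and renaming $\varepsilon$ gives the required bound $\underline\dim_A(\Lambda_{\Psi_F}) \geq \HD(\nu_\pp) - \varepsilon$.

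The hard part will be the hyperplane diffuseness of $\Lambda_{\Psi_F}$, and this is the only step that uses the irreducibility of $\pp$. Given $x \in \Lambda_{\Psi_F}$, a small radius $\rho$, and an affine hyperplane $\mathcal L$ with unit normal $a$, choose a coordinate $i$ with $|a_i| \geq d^{-1/2}$ and the $\Psi_F$-cylinder through $x$ whose rectangle has extent $\asymp \rho$ in coordinate $i$. By the grading of the contraction ratios, that rectangle has extent $\gg \rho$ in the slower coordinates $j < i$ and $\ll \rho$ in the faster coordinates $j > i$, so within $B(x, \rho)$ it is pinned near $x$ in every coordinate $j < i$. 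Extend the cylinder inside $F$ using one of the stored irreducibility witnesses $\aa, \bb$ at coordinate $i$: since $\aa$ and $\bb$ agree in all coordinates $j$ with $\chi_j(\pp) < \chi_i(\pp)$, the two resulting sub-cylinders coincide to within $\ll \rho$ in every such coordinate and hence both lie in $B(x, \rho)$, while their $i$-th coordinates differ by a definite fraction $\beta$ of $\rho$ (goodness guarantees this gap is genuine). Taking any point of $\Lambda_{\Psi_F}$ in whichever sub-cylinder is farther from $\mathcal L$ along the $i$-th axis produces $y \in \Lambda_{\Psi_F} \cap B(x, \rho)$ with $\operatorname{dist}(y, \mathcal L) \gtrsim |a_i|\beta\rho \gtrsim \rho$, with $\beta$ depending only on $\Phi$, $\pp$, and $N$. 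The delicate point throughout is that, non-conformality being in play, zooming into $\Lambda_{\Psi_F}$ does not reproduce a rescaled copy of the sponge but only a ``partially developed'' configuration, so one must track the per-coordinate scales carefully; it is precisely irreducibility (a move in the critical coordinate that leaves the slower ones undisturbed) together with goodness (genuine separation of the two sub-cylinders) that rescues the argument, and distinctness of the Lyapunov exponents is what makes ``the critical coordinate'' well defined at every scale.
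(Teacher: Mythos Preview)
Your overall strategy matches the paper's: build a homogeneous sub-IFS $\Psi_F$ on words of length $N$, show $\Lambda_{\Psi_F}$ is hyperplane diffuse with $\underline\dim_A(\Lambda_{\Psi_F})$ close to $\HD(\nu_\pp)$, and invoke Corollary~\ref{corollaryBA}. The lower Assouad computation and the uniform-fibre pruning are in the right spirit (the paper carries out the pruning via a specific backward recursion $T_d\supset\cdots\supset T_0$ to get $\mu(T_0\cap[\omega]_{I_{<i}})\geq\epsilon\mu([\omega]_{I_{<i}})$, and also composes with a fixed $\phi_\tau$ to upgrade goodness to strong goodness; you should not skip these).

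The genuine gap is in your diffuseness argument. Your mechanism, ``store a single witness pair $\aa,\bb$ for each $i$ and make sure it occurs as a block in every $\omega\in F$'', cannot do what you need. To escape $\thickvar{\mathcal L}{\beta\rho}$ you must produce $y\in B(x,\rho)$, which forces $|y_j-x_j|\leq\rho$ for every $j$. If you pick the cylinder depth $n$ so that coordinate $i$ has extent $\asymp\rho$, then for $j<i$ the extent at depth $n$ is $\gg\rho$; the only way to keep $y_j=x_j$ in those coordinates is to change exactly one letter of $x$'s coding (at position $n+1$) to an alternative that agrees with $\omega_{n+1}$ in every coordinate $j<i$. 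But your stored witnesses need not be available at position $n+1$: $\omega_{n+1}$ may be neither $\aa$ nor $\bb$, and the block you stored may sit at a completely different depth. So as written, the point you construct can leave $B(x,\rho)$ in the slow coordinates.

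What the paper does instead is dispense with fixed witnesses and secure \emph{uniform irreducibility} of $\Psi_F$: for every letter $\omega\in T_0$ and every $i$, there is another letter in $T_0$ agreeing with $\omega$ in coordinates $j<i$ but differing in coordinate $i$. This follows not from inserting blocks but from the entropy estimate \eqref{entropyiterate} together with \eqref{T0}, which force the fibre cardinalities $\#(E_{i,\aa})\geq 2$. With uniform irreducibility in hand, the diffuseness proof (Proposition~\ref{propositiondiffuse}) does not select $i$ from the hyperplane normal; it builds, for \emph{each} $i$, a point $\yy^{(i)}\in B(x,\rho)$ by altering the letter at depth $N_i+1$, observes that the matrix $(y^{(i)}_j-x_j)_{i,j}$ is upper triangular with diagonal entries $\asymp\rho$, and uses $\|M^{-1}\|\lesssim\rho^{-1}$ to find, for any unit normal $\vv$, some $i$ with $|(\yy^{(i)}-\xx)\cdot\vv|\gtrsim\rho$. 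Your single-coordinate shortcut should be replaced by this argument.
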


If $\Lambda_\Phi$ is an irreducible Bara\'nski sponge with distinguishable coordinates, then every probability measure on $E$ is both irreducible and good, and the set of measures with distinct Lyapunov exponents forms an open dense set. Thus, Theorem \ref{maintheorem2} implies Theorem \ref{maintheorem}.\\

{\bf Outline of the paper.} In the next section we recall some known results about the dimension of intersection of $\BA_d$ with fractals and its relation to the lower Assouad dimension, and state a strengthening of Theorem \ref{maintheorem2}, namely Theorem \ref{maintheorem3}. In Section \ref{sectionLG}, we prove some results which suffice to give a useful estimate on the Hausdorff dimension of $\Lambda_{\Psi_F} \cap \BA_d$, where $\Psi_F$ is a ``homogeneous'' IFS as described in our proof sketch above. In Section \ref{sectionproof} we use this estimate to prove Theorem \ref{maintheorem3}.

\begin{convention}
The symbols $\lesssim$, $\gtrsim$, and $\asymp$ will denote coarse multiplicative asymptotics. For example, $A\lesssim B$ means that there exists a constant $C > 0$ (the \emph{implied constant}) such that $A\leq C B$.
\end{convention}

\section{Schmidt's game, hyperplane diffuse sets, and the lower Assouad dimension}
\label{sectiongames}

For $d \geq 2$, the full dimension of $\BA_d$ in $\R^d$ was proven by \name{Wolfgang}{Schmidt} \cite{Schmidt2} using a technique now known as Schmidt's game. Since then, Schmidt's game and its variants have been used to prove the full dimension intersection of $\BA_d$ with various fractals, as well as various stability properties such as $\CC^1$ incompressibility \cite{BFKRW}. The modern approach \cite{BFKRW} is to first show that $\BA_d$ is winning for a variant of Schmidt's game known as the hyperplane absolute game, and then show that any set winning for the hyperplane absolute game is also winning for Schmidt's game played on any fractal satisfying a certain geometric condition called hyperplane diffuseness. The Hausdorff dimension of a set winning for Schmidt's game played on a fractal can be bounded from below based on the geometry of that fractal; there are currently two known ways of doing this, one based on the dimensions of fully supported doubling Frostman measures on the fractal \cite[Theorem 1.1]{KleinbockWeiss1}, and the other based on the lower Assouad dimension of the fractal \cite[Theorem 3.1]{Fishman}. The two methods give the same bound whenever the fractal in question is Ahlfors regular,\Footnote{We recall that a measure $\mu$ on $\R^d$ is said to be \emph{Ahlfors $s$-regular} if for all $\xx\in \Supp(\mu)$ and $0 < \rho \leq 1$, we have \[\mu(B(\xx,\rho)) \asymp \rho^\delta.\] A closed set $K \subset \R^d$ is said to be \emph{Ahlfors $s$-regular} if there exists an Ahlfors $s$-regular measure $\mu$ such that $K = \Supp(\mu)$, and \emph{Ahlfors regular} if it is Ahlfors $s$-regular for some $s > 0$.} which is true in most applications that have been considered so far. However, in our case the fractals are not Ahlfors regular, and there is a difference between the two methods. We work with the second method, based on the lower Assouad dimension, as it seems to be more suited to the situation we consider here.

\begin{definition}
Let $K$ be a closed subset of $\R^d$. For any $0 < \alpha,\beta < 1$, \emph{Schmidt's $(\alpha,\beta)$-game} is an infinite game played by two players, Alice and Bob, who take turns choosing balls in $\R^d$ whose centers lie in $K$, with Bob moving first. The players must choose their moves so as to satisfy the relations
\[
B_1 \supset A_1 \supset B_2 \supset \cdots
\]
and
\[
\rho(A_k) = \alpha \rho(B_k) \text{ and } \rho(B_{k + 1}) = \beta \rho(A_k) \text{ for } k\in\N,
\]
where $B_k$ and $A_k$ denote Bob's and Alice's $k$th moves, respectively, and where $\rho(B)$ denotes the radius of a ball $B$. Since the sets $B_1,B_2,\ldots$ form a nested sequence of nonempty closed sets whose diameters tend to zero, it follows that the intersection $\bigcap_k B_k$ is a singleton, say $\bigcap_k B_k = \{\xx\}$, whose unique member $\xx$ lies in $K$. The point $\xx$ is called the \emph{outcome} of the game. A set $S \subseteq K$ is said to be \emph{$(\alpha,\beta)$-winning on $K$} if Alice has a strategy guaranteeing that the outcome lies in $S$, regardless of the way Bob chooses to play. It is said to be \emph{$\alpha$-winning on $K$} if it is $(\alpha,\beta)$-winning on $K$ for every $0 < \beta < 1$, and \emph{winning on $K$} if it is $\alpha$-winning on $K$ for some $0 < \alpha < 1$.
\end{definition}

\begin{definition}
A set $K \subset \R^d$ is said to be \emph{hyperplane diffuse} if there exists $\beta > 0$ such that for all $0 < \rho \leq 1$ and $\xx\in K$ and for any (affine) hyperplane $\LL \subset \R^d$, we have
\[
B(\xx,\rho)\cap K \butnot \thickvar{\LL}{\beta\rho} \neq\emptyset,
\]
where $\thickvar{\LL}{\epsilon}$ denotes the closed $\epsilon$-thickening of $\LL$, i.e. $\thickvar{\LL}{\epsilon} = \{\yy\in\R^d : \dist(\yy,\LL) \leq \epsilon\}$.
\end{definition}

\begin{proposition}[{\cite[Theorem 2.5 + Proposition 4.7]{BFKRW}}]
\label{propositionBFKRW}
Let $K \subset \R^d$ be a closed and hyperplane diffuse set. Then $\BA_d\cap K$ is winning on $K$.
\end{proposition}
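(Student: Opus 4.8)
The plan is to reproduce the standard two-step argument from \cite{BFKRW}, which the statement explicitly attributes to ``Theorem 2.5 + Proposition 4.7'' of that reference. Step one is to invoke the fact (Theorem 2.5 of \cite{BFKRW}) that $\BA_d$ is \emph{hyperplane absolute winning} in $\R^d$: there is an absolute-game variant of Schmidt's game in which, on each turn, Bob chooses a ball $B_k$ of radius $\rho_k$ and Alice removes from it the $\epsilon_k \rho_k$-neighborhood of a hyperplane of her choosing (for any $\epsilon_k$ bounded by a fixed parameter $\beta_0$), with Bob then choosing a ball of radius at least $\beta_0 \rho_k$ inside what remains; and $\BA_d$ is winning for this game. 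This is the input about Diophantine approximation, and I would cite it rather than reprove it. Step two, the geometric transfer (Proposition 4.7 of \cite{BFKRW}), is where the hyperplane diffuseness of $K$ enters, and this is the part a reader would want spelled out.

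For step two, I would fix the diffuseness parameter $\beta > 0$ of $K$ and show that for a suitable choice of $\alpha$ (depending on $\beta$ and $d$), the set $\BA_d \cap K$ is $(\alpha,\beta')$-winning on $K$ for every $\beta' > 0$. The idea is to have Alice simulate her hyperplane-absolute-game strategy inside the $(\alpha,\beta')$-game on $K$. When Bob plays a ball $B_k$ of radius $\rho_k$ centered in $K$, Alice's absolute-game strategy prescribes a hyperplane $\LL_k$ to avoid at scale $\epsilon_k \rho_k$. In the $(\alpha,\beta')$-game Alice cannot delete a neighborhood of $\LL_k$; instead she must choose a single ball $A_k \subset B_k$ of radius $\alpha \rho_k$ centered in $K$, and she wants $A_k$ to lie in the complement of $\thickvar{\LL_k}{\epsilon_k\rho_k}$. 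Hyperplane diffuseness supplies, inside $B(\xx_k, \rho_k/2) \cap K$ say, a point $\yy_k \in K$ at distance $> \beta (\rho_k/2)$ from $\LL_k$; centering a ball of radius $\alpha \rho_k$ at $\yy_k$ then keeps that ball inside $B_k$ and, provided $\alpha + \epsilon_k < \beta/2$ (which is arranged by taking $\alpha$ and the absolute-game parameter $\beta_0$ small relative to $\beta$), keeps it disjoint from $\thickvar{\LL_k}{\epsilon_k\rho_k}$. One must also check that the radii bookkeeping is consistent: after Bob's response $B_{k+1}$ of radius $\beta' \alpha \rho_k$ inside $A_k$, this is a legal Bob move in the absolute game (its radius exceeds $\beta_0$ times the radius of the ball Alice handed him), so the simulation continues. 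Since Alice's absolute-game strategy guarantees the outcome lies in $\BA_d$, and all balls are centered in the closed set $K$ so the outcome lies in $K$ as well, the outcome lies in $\BA_d \cap K$; as $\beta'$ was arbitrary, $\BA_d \cap K$ is $\alpha$-winning, hence winning, on $K$.

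The main obstacle is the scale-matching in the simulation: one has to verify that a single legal move in the $(\alpha,\beta')$-game can always absorb the ``delete a hyperplane neighborhood'' move of the absolute game, uniformly over all turns, and this forces a careful choice of the constant $\alpha$ in terms of $\beta$ and $d$ together with a uniform lower bound $\beta_0$ on how much room Alice leaves Bob in the absolute game. These are precisely the quantitative inequalities worked out in \cite[\S4]{BFKRW}; since the excerpt permits citing that paper, I would state the needed inequalities ($\alpha + \beta_0 < \beta/2$, together with the radius-ratio check $\beta'\alpha \geq \beta_0$ being handled by shrinking $\beta_0$ further or by a standard iterate/telescoping trick when $\beta'$ is tiny), indicate why they can be met, and refer to \cite{BFKRW} for the routine verification, rather than grinding through it here.
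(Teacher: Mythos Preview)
The paper does not prove this proposition at all: it is stated with the attribution ``\cite[Theorem 2.5 + Proposition 4.7]{BFKRW}'' and then used as a black box, with only the remark that the conclusion persists if $\BA_d$ is replaced by any hyperplane absolute winning set. Your proposal therefore goes well beyond the paper by sketching the actual two-step argument from \cite{BFKRW}.

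Your sketch is essentially correct and matches the structure of the cited results: invoke that $\BA_d$ is hyperplane absolute winning, then transfer to Schmidt's game on $K$ using diffuseness to find, inside each Bob-ball, a center in $K$ far from Alice's forbidden hyperplane. One small wobble: in the hyperplane absolute game Alice does not ``hand Bob a ball''; she removes a hyperplane neighborhood from Bob's current ball, and Bob's next ball must lie in what remains with radius at least $\beta_0$ times the old radius. So the bookkeeping you need is that $B_{k+1}$ (Bob's move in the $(\alpha,\beta')$-game on $K$, of radius $\alpha\beta'\rho_k$) is a legal Bob move in the simulated absolute game relative to $B_k$, i.e.\ that $B_{k+1} \subset B_k \setminus \thickvar{\LL_k}{\epsilon_k\rho_k}$ and $\alpha\beta' \geq \beta_0$. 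The first follows from $A_k \cap \thickvar{\LL_k}{\epsilon_k\rho_k} = \emptyset$, and the second is arranged by choosing $\beta_0$ after $\alpha$ and $\beta'$ (or, as you note, by telescoping several absolute-game rounds into one when $\beta'$ is very small). With that clarification your outline is sound, and more detailed than anything the paper itself supplies.
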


This result remains true if $\BA_d$ is replaced by any \emph{hyperplane absolute winning} set, see \cite[p.4]{BFKRW} for the definition. The same applies to all of the results of this paper.

To state in a clearer way the lower bound for the Hausdorff dimension of a winning set discovered by the second-named author \cite[Theorem 3.1]{Fishman}, we recall the definition of the lower Assouad dimension of a set. (The lower Assouad dimension has been given several names in the literature, see \cite[p.6688]{Fraser}, but the name ``lower Assouad dimension'' seems by far the most natural to us.)

\begin{definition}
\label{definitionlowerA}
Given $\rho > 0$ and $S \subset \R^d$, we let $N_\rho(S)$ denote the cardinality of any maximal $\rho$-separated subset of $S$. (Choosing a different maximal $\rho$-separated set will not change $N_\rho(S)$ by more than a constant factor.) The \emph{lower Assouad dimension} of a nonempty closed set $K \subset \R^d$, denoted $\underline\AD(K)$, is the supremum of $s \geq 0$ such that there exists a constant $c > 0$ such that for all $\xx\in K$ and $0 < \beta,\rho \leq 1$, we have
\[
N_{\beta\rho}\big(B(\xx,\rho)\cap K\big) \geq c \beta^{-s}.
\]
Equivalently,
\begin{align*}
\underline\AD(K)
&= \liminf_{\beta \to 0} \inf_{0 < \rho \leq 1} \inf_{\xx\in K} \frac{\log N_{\beta\rho}(B(\xx,\rho)\cap K)}{-\log(\beta)}\\
&= \liminf_{\beta \to 0} \liminf_{\rho \to 0} \inf_{\xx\in K} \frac{\log N_{\beta\rho}(B(\xx,\rho)\cap K)}{-\log(\beta)}
\end{align*}
(cf. Appendix \ref{sectionassouad}).
\end{definition}

The lower Assouad dimension is the smallest of the standard fractal dimensions. In particular, if $K$ is closed then $\underline{\AD}(K) \leq \HD(K)$, see e.g. \cite[Lemma 2.2]{KLV}. Note that unlike most notions of dimension, the lower Assouad dimension is not monotone: a subset may have larger lower Assouad dimension than the set it is contained in.

The essential idea of the following result is found in \cite[Theorem 3.1]{Fishman}. We include the proof for completeness.

\begin{proposition}
\label{propositionfishman}
Let $K\subset \R^d$ be closed and let $S \subset K$ be winning on $K$. Then
\[
\HD(S) \geq \underline\AD(K),
\]
where $\underline\AD$ denotes the lower Assouad dimension.
\end{proposition}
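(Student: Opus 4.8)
The plan is to show that for every $s < \underline{\AD}(K)$, the winning set $S$ satisfies $\HD(S) \geq s$; letting $s \to \underline{\AD}(K)$ then gives the conclusion. Fix such an $s$, so that by Definition \ref{definitionlowerA} there is a constant $c > 0$ with $N_{\beta\rho}(B(\xx,\rho) \cap K) \geq c\beta^{-s}$ for all $\xx \in K$ and all $0 < \beta,\rho \leq 1$. Since $S$ is winning on $K$, fix $\alpha \in (0,1)$ such that $S$ is $(\alpha,\beta)$-winning on $K$ for every $\beta \in (0,1)$. The idea is to use Alice's winning strategy to build, for a well-chosen $\beta$, a Cantor-type subtree of outcomes inside $S$ that carries a Frostman measure of exponent close to $s$, and then invoke the mass distribution principle.

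First I would set up the combinatorics of a single round. Given a Bob move $B = B(\yy,\rho)$ with center in $K$, Alice's strategy dictates a response $A = A(\yy,\rho)$, a ball of radius $\alpha\rho$ with center in $K$, inside which Bob must make his next move. Choose $\beta$ small (to be fixed below, depending on $\alpha$ and $s$) and consider the collection of balls of radius $\beta \cdot \alpha\rho$ that Bob could legally play at the next step: their centers must lie in $A \cap K$ and, crucially, the resulting ball must sit inside $A$, so the centers should be taken from $B(\text{center}(A), (1-\beta)\alpha\rho) \cap K$, which still contains a ball of radius comparable to $\alpha\rho$ about a point of $K$. By the lower Assouad dimension bound applied at scale $(1-\beta)\alpha\rho$ with contraction ratio $\asymp \beta$, there are $\gtrsim \beta^{-s}$ such centers that are $\beta\alpha\rho$-separated; thinning them out, I can extract a subfamily of $\gtrsim \beta^{-s}$ admissible Bob-moves that are pairwise $c'\alpha\rho$-separated for a fixed $c' > 0$ (independent of $\rho$), so that the corresponding Alice-responses, and hence all subsequent play, stay in disjoint balls of radius $\asymp \alpha\rho$. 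Iterating this, I obtain a tree $\mathcal{T}$ of legal plays of depth $n$ for every $n$, with branching number $b \gtrsim \beta^{-s}$ at each level, where the nodes at level $n$ are disjoint balls of radius $\asymp (\alpha\beta)^n$ — wait, more precisely radius $\alpha^n \beta^{n-1}\rho_0$ up to constants, so of order $(\alpha\beta)^n$ up to a fixed factor. Every infinite branch is a legal play of Schmidt's $(\alpha,\beta)$-game, so its outcome lies in $S$; thus the compact set $C = \bigcap_n \bigcup_{\text{level-}n\text{ balls}} \subseteq S$.

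Next I would put the natural uniform measure $\mu$ on $C$: each level-$n$ ball in $\mathcal{T}$ gets mass $b^{-n}$, distributed equally among its $b$ children. To apply the mass distribution principle I need to bound $\mu(B(\xx,r))$ for small $r$. Given $r$, choose $n$ with $(\alpha\beta)^{n+1} \lesssim r \lesssim (\alpha\beta)^n$; because the level-$n$ balls have radius $\asymp(\alpha\beta)^n$ and, by the separation built into the construction, are $\asymp \alpha \cdot (\alpha\beta)^{n-1}$-separated (separation dominates radius by a factor $\asymp 1/\beta$), a ball of radius $r$ meets a bounded number of level-$n$ balls, hence $\mu(B(\xx,r)) \lesssim b^{-n} \lesssim r^{\frac{\log b}{-\log(\alpha\beta)}}$. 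Therefore $\HD(S) \geq \HD(C) \geq \frac{\log b}{-\log(\alpha\beta)} \geq \frac{\log(c_0) + s\log(1/\beta)}{\log(1/\alpha) + \log(1/\beta)}$ for a constant $c_0$. Finally, letting $\beta \to 0$ with $\alpha$ fixed, the right-hand side tends to $s$. Since $s < \underline{\AD}(K)$ was arbitrary, $\HD(S) \geq \underline{\AD}(K)$.

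I expect the main obstacle to be the second step: extracting at each round a well-separated family of admissible Bob-moves whose separation is a fixed multiple of the parent radius (not merely of the child radius), while simultaneously ensuring those moves are genuinely legal (the child ball nested inside Alice's ball) and still numerous enough — $\gtrsim \beta^{-s}$ of them. The lower Assouad dimension gives separation at scale $\beta\rho$, but one must upgrade to separation at scale $\asymp \rho$ among a $\beta^{-s}$-sized subfamily; this costs only a constant factor in the count because a ball of radius $\asymp\rho$ can contain only boundedly many points that are $\asymp\rho$-separated, and we are packing $\gtrsim\beta^{-s}$ points that are merely $\beta\rho$-separated into a region of radius $\asymp\rho$, so a greedy selection keeps a $\gtrsim\beta^{-s}$-fraction. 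Handling the boundary effect ($1-\beta$ shrinkage so children nest strictly inside the parent Alice-ball) and keeping all constants uniform across scales and across the depth of the tree is the bookkeeping that makes this delicate but not deep. Everything else — the measure construction and the mass distribution estimate — is standard.
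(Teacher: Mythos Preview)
Your overall strategy---fix $\alpha$, let Bob play a tree of counterstrategies against Alice's winning strategy with branching $\gtrsim \beta^{-s}$, put the uniform measure on the set of outcomes, and send $\beta\to 0$---is exactly the paper's approach. But the thinning step you single out as the ``main obstacle'' is both impossible and unnecessary, and as written it breaks the argument. You claim that from $\gtrsim \beta^{-s}$ centers in a ball of radius $\alpha\rho$ you can extract $\gtrsim \beta^{-s}$ of them that are $c'\alpha\rho$-separated for a fixed $c'>0$; this is false, since a ball of radius $\alpha\rho$ in $\R^d$ can contain at most $O((c')^{-d})$ points at mutual distance $\geq c'\alpha\rho$, a bound independent of $\beta$. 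Your last paragraph has the logic reversed: the packing bound limits how \emph{many} parent-scale-separated points can fit, it does not guarantee that a large subfamily survives a greedy selection.

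Fortunately you do not need parent-scale separation. Take the $\gtrsim \beta^{-s}$ centers to be $3\beta\alpha\rho$-separated (this is what the lower Assouad bound delivers directly, at the cost of an $\epsilon$ in the exponent), so that the resulting Bob-balls of radius $\beta\alpha\rho$ are pairwise $\beta\alpha\rho$-apart. If $\omega,\tau$ first differ at index $m$, the two plays diverge at Bob's move inside $A_m$, and one gets $\dist(\pi(\omega),\pi(\tau))\asymp(\alpha\beta)^m$; this bilipschitz equivalence with the standard ultrametric already yields $\HD(\pi(\{1,\ldots,N\}^\N))=\log N/(-\log(\alpha\beta))$, and letting $\beta\to 0$ finishes exactly as you wrote. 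In your mass-distribution language: at level $n$ the balls have radius $\asymp(\alpha\beta)^n$ and siblings are $\asymp(\alpha\beta)^n$-separated (not $(\alpha\beta)^n/\beta$-separated as you claim), and that is already enough for a ball of radius $r\asymp(\alpha\beta)^n$ to meet only $O(1)$ of them.
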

\begin{proof}
Let $\delta = \underline\AD(K)$, and fix $\epsilon > 0$. Then by definition, there exists a constant $c = c_\epsilon > 0$ such that for all $\xx\in K$, $0 < \beta \leq 1/4$, and $0 < \rho \leq 1$, we have
\[
N_{3\beta\rho}\big(B(\xx,(1 - \beta)\rho)\cap K\big) \geq N = N(\beta) \df \lfloor c_\epsilon \beta^{-(\delta - \epsilon)}\rfloor.
\]
Now let $\alpha > 0$ be chosen so that $S$ is $\alpha$-winning, and fix $0 < \beta \leq 1/2$. For each ball $A = B(\xx,\rho)$, we choose a $3\beta\rho$-separated sequence $\yy^{(1)}(A),\ldots,\yy^{(N)}(A) \in B(\xx,(1 - \beta)\rho) \cap K$, and we let $f_i(A)$ denote the ball centered at $\yy^{(i)}(A)$ of radius $\beta\rho$. Then the balls $f_1(A),\ldots,f_N(A)$ are contained in $A$ and separated by distances of at least $\beta\rho$. Moreover, each ball $f_i(A)$ is a legal move for Bob to make in response to Alice playing $A$ as her move.

Now fix a winning strategy for Alice to win the $(\alpha,\beta)$-game, and we will consider the family of counterstrategies for Bob such that whenever Alice plays a ball $A_k$, Bob responds by playing one of the balls $f_1(A_k),\ldots,f_N(A_k)$. We fix Bob's initial ball (chosen to have radius less than 1), and for each function $\omega:\N\to E \df \{1,\ldots,N(\beta)\}$, we consider the counterstrategy in which Bob responds to Alice's $k$th move $A_k$ by choosing the ball $B_{k + 1} = f_{\omega(k)}(A_k)$. We denote the outcome of this counterstrategy by $\pi(\omega)$, so that $\pi:E^\N \to K$. The separation conditions on the balls $f_1(A),\ldots,f_N(A)$ guarantee that
\[
\dist(\pi(\omega),\pi(\tau)) \asymp (\alpha\beta)^{|\omega\wedge\tau|} \all \omega,\tau\in E^\N,
\]
where $|\omega\wedge\tau|$ denotes the length of the longest common initial segment of $\omega$ and $\tau$. Thus the uniform Bernoulli measure on $\pi(E^\N)$ is Ahlfors $s(\beta)$-regular, where
\[
s(\beta) \df \frac{\log N(\beta)}{-\log(\alpha\beta)} = \frac{-(\delta - \epsilon) \log(\beta) + O(1)}{-\log(\alpha\beta)} \tendsto{\beta\to 0} \delta - \epsilon.
\]
It follows that $\HD(\pi(E^\N)) \geq s(\beta)$. Now, since each element of $\pi(E^\N)$ is the outcome of a game where Alice played her winning strategy, we have $\pi(E^\N) \subset S$ and thus $\HD(S) \geq s(\beta) \to \delta - \epsilon$. Since $\epsilon$ was arbitrary, we have $\HD(S) \geq \delta$.
\end{proof}

Combining Propositions \ref{propositionBFKRW} and \ref{propositionfishman} gives:

\begin{corollary}
\label{corollaryBA}
Let $K \subset \R^d$ be a closed and hyperplane diffuse set. Then
\[
\HD(\BA_d\cap K) \geq \underline\AD(K).
\]
\end{corollary}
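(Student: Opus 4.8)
The plan is simply to chain the two preceding propositions, so there is essentially no obstacle to overcome here; all the substance lies in Propositions \ref{propositionBFKRW} and \ref{propositionfishman}. First I would set $S \df \BA_d \cap K$. Since $K$ is closed and hyperplane diffuse, Proposition \ref{propositionBFKRW} applies verbatim and tells us that $S = \BA_d \cap K$ is winning on $K$. (If one wants the more general statement, one would here invoke the remark following Proposition \ref{propositionBFKRW}: $\BA_d$ may be replaced by any hyperplane absolute winning set, since the proof only uses that property of $\BA_d$.)

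Next, $K$ is closed and $S \subset K$ is winning on $K$, so the hypotheses of Proposition \ref{propositionfishman} are met, and it yields $\HD(S) \geq \underline\AD(K)$, that is,
\[
\HD(\BA_d \cap K) \geq \underline\AD(K),
\]
which is exactly the assertion. The only points to double-check are purely bookkeeping: that ``winning on $K$'' in the conclusion of Proposition \ref{propositionBFKRW} is literally the same notion of ``winning on $K$'' required as a hypothesis in Proposition \ref{propositionfishman} (it is, both referring to Schmidt's $(\alpha,\beta)$-game played with centers in $K$), and that no additional regularity of $K$ is needed --- in particular, Proposition \ref{propositionfishman} does not assume Ahlfors regularity, which matters precisely because the sponges considered later in the paper fail to be Ahlfors regular. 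Since there is nothing further to verify, the corollary follows immediately.
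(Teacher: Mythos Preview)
Your proposal is correct and matches the paper's own argument exactly: the corollary is obtained simply by combining Propositions \ref{propositionBFKRW} and \ref{propositionfishman}, with no additional ingredients.
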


With this result in mind, we can see how the following theorem is a strengthening of Theorem \ref{maintheorem2}:

\begin{theorem}
\label{maintheorem3}
Let $\Lambda_\Phi \subset [0,1]^d$ be a self-affine sponge, and let $\pp$ be an irreducible good probability measure on $E$ with distinct Lyapunov exponents. Then there exists a sequence of strongly Lalley--Gatzouras sponges $\Lambda_{\Psi_N} \subset \Lambda_\Phi$ that are hyperplane diffuse and satisfy
\[
\underline\AD(\Lambda_{\Psi_N}) \to \HD(\nu_\pp).
\]
\end{theorem}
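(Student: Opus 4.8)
The plan is to realize the measure $\nu_\pp$ as a limit of "homogeneous" sub-sponges obtained by grouping together long $\mu$-typical words, and then to estimate all the relevant dimensions of these sub-sponges from below. Fix $\epsilon > 0$. For a large integer $N$, apply the Shannon--McMillan--Breiman theorem (or just the law of large numbers, since $\pp^\N$ is Bernoulli) to the product measure $\pp^\N$ on $E^\N$: there is a set $F = F_N \subset E^N$ of words $\mathbf{w} = (w_1,\ldots,w_N)$ such that (i) $\pp^N(F)$ is close to $1$, (ii) every $\mathbf{w}\in F$ has $\pp^N(\{\mathbf{w}\})$ comparable to $e^{-N H}$ where $H$ is the entropy of $\pp$, and (iii) for every $\mathbf{w}\in F$ and every $i\in D$ the quantity $-\frac{1}{N}\log|\phi_{\mathbf{w},i}'| = -\frac1N\sum_{k=1}^N \log|\phi_{w_k,i}'|$ is within $\epsilon$ of $\chi_i(\pp)$. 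Let $\Psi_N = \Psi_F$ be the diagonal IFS $(\phi_{\mathbf{w}})_{\mathbf{w}\in F}$; note each $\phi_{\mathbf{w}}$ is a composition of $N$ maps from $\Phi$, so $\Lambda_{\Psi_N}\subset\Lambda_\Phi$.

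Next I would check the three asserted properties of $\Lambda_{\Psi_N}$. For the \emph{strongly Lalley--Gatzouras} property: because $\pp$ has distinct Lyapunov exponents, for $N$ large the coordinatewise contraction ratios of every $\phi_{\mathbf{w}}$, $\mathbf{w}\in F$, are totally ordered in the same way across all of $F$ (say $|\phi_{\mathbf{w},1}'| < \cdots < |\phi_{\mathbf{w},d}'|$ after relabeling, using that all these ratios concentrate near the distinct values $e^{-N\chi_i(\pp)}$); this gives the coordinate-ordering condition. The separation/strong-separation condition needed for the "strongly Lalley--Gatzouras" definition should follow from $\pp$ being \emph{good}: goodness says that for every threshold $x$ the coordinate block $I(\pp,x) = \{i : \chi_i(\pp)\le x\}$ is good, i.e. the projected base IFS $\Phi_{I(\pp,x)}$ has disjoint images, and passing to the $N$-fold compositions on that coordinate block preserves (indeed improves) disjointness, so the relevant projected pieces of $\Psi_N$ are separated. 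Irreducibility of $\pp$ is what guarantees that in each coordinate the construction is genuinely "spread out" rather than degenerate --- this is the input that will make the sponge hyperplane diffuse and will also prevent the lower Assouad dimension from dropping.

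The heart of the argument is the two dimension estimates. First, $\HD(\Lambda_{\Psi_N})$ and $\HD(\nu_{\pp^N\!\restriction F})$ should both be close to $\HD(\nu_\pp)$: one computes the dimension of the (normalized restriction of the) Bernoulli measure $\pp^N\!\restriction F$ on $\Lambda_{\Psi_N}$ via the Ledrappier--Young-type formula \cite[(2.13)]{DasSimmons1} for Lalley--Gatzouras sponges, and because of properties (ii) and (iii) above every term in that formula --- entropies $\approx NH$, Lyapunov exponents $\approx N\chi_i(\pp)$, and the projected entropies onto coordinate blocks --- is within $O(N\epsilon)$ of $N$ times the corresponding term in the formula for $\HD(\nu_\pp)$, so after dividing the factors of $N$ cancel and we get $\HD(\nu_{\pp^N\!\restriction F})\to\HD(\nu_\pp)$ as $N\to\infty$, $\epsilon\to 0$; and this is $\le\HD(\Lambda_{\Psi_N})$. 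Second --- and this is the main obstacle --- one must show $\underline\AD(\Lambda_{\Psi_N})$ is \emph{also} close to $\HD(\nu_\pp)$, not just bounded above by it. The point is that $\Lambda_{\Psi_N}$ is "almost homogeneous": all generators contract by nearly the same amount $e^{-N\chi_i(\pp)}$ in each coordinate $i$, so at every scale and location the set looks roughly like an Ahlfors-regular set of the appropriate exponent, forcing the lower Assouad dimension up to meet the Hausdorff dimension (up to an error controlled by $\epsilon$). Making this precise requires a careful multiscale counting argument: given a ball $B(\xx,\rho)\cap\Lambda_{\Psi_N}$, one finds the generation at which cylinders have size $\approx\rho$ in the "thinnest" direction, counts how many sub-cylinders survive down to scale $\beta\rho$, and uses the near-constancy of the contraction ratios together with irreducibility (to ensure enough distinct letters appear in each coordinate block at each stage) to bound this count below by $c\beta^{-(\HD(\nu_\pp)-O(\epsilon))}$ --- uniformly in $\xx$ and $\rho$, which is exactly what the lower Assouad dimension demands. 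I expect this uniform lower bound, and in particular handling the interplay between the $d$ different contraction rates when $\beta\rho$ falls between two "natural" scales of the sponge, to be the technically delicate part; the hyperplane diffuseness of $\Lambda_{\Psi_N}$, needed for Corollary \ref{corollaryBA} to apply downstream, should come out of the same counting together with the irreducibility assumption, by checking that no $\beta\rho$-neighborhood of a hyperplane can contain all the surviving sub-cylinders inside a given ball. Finally, letting $N\to\infty$ with $\epsilon = \epsilon_N\to 0$ produces the desired sequence with $\underline\AD(\Lambda_{\Psi_N})\to\HD(\nu_\pp)$.
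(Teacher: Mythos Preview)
Your overall strategy matches the paper's: pass to a set $F \subset E^N$ of $\pp^N$-typical words, form the sub-IFS $\Psi_N$, and show it is strongly Lalley--Gatzouras, hyperplane diffuse, and has lower Assouad dimension close to $\HD(\nu_\pp)$. However, there is a genuine gap at exactly the point you flag as ``technically delicate''.

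The problem is that knowing $\pp^N(F)$ is close to $1$ and that every $\mathbf w\in F$ has the right Lyapunov and entropy statistics does \emph{not} guarantee that every fiber of $F$ over a projection $\pi_{I_{<i}}$ is large. Concretely, for some $\aa\in\pi_{I_{<i}}(F)$ the set $\{b : (\aa,b)\in \pi_{I_{\leq i}}(F)\}$ could be a singleton; then in the formula $\underline\AD(\Lambda_{\Psi_N}) = \sum_i \min_\aa \dim(\Psi_{N;\,i,\aa})$ of Theorem~\ref{theoremassdimLG} the $i$th summand vanishes, and simultaneously $\Psi_N$ fails to be uniformly irreducible, so Proposition~\ref{propositiondiffuse} gives no hyperplane diffuseness. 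Irreducibility of $\pp$ on the alphabet $E$ does not survive passage to an arbitrary typical subset $F\subset E^N$. The paper handles this with a specific refinement: starting from the typical set $S=S_N$, it defines $T_d\supset T_{d-1}\supset\cdots\supset T_0$ by backward recursion, throwing out at stage $i$ those $\omega$ whose $I_{<i}$-fiber in $T_i$ carries less than an $\epsilon$-fraction of the mass of the full $I_{<i}$-cylinder. One checks $\mu(T_0)>0$ and, crucially (Claim~\ref{claimT0}), that $T_0\cap[\omega]_{I_{<i}} = T_i\cap[\omega]_{I_{<i}}$ for $\omega\in T_0$; this yields the uniform fiber bound $\mu(T_0\cap[\omega]_{I_{<i}})\geq\epsilon\,\mu([\omega]_{I_{<i}})$, which together with the conditional-entropy typicality \eqref{entropyiterate} forces $\#(E_{i,\aa})\geq \epsilon\, e^{(1-\epsilon)N\,\text{h}_\pp(I_{\leq i}\mid I_{<i})}$ for \emph{every} $\aa$. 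This single estimate drives both the lower Assouad bound (via Theorem~\ref{theoremassdimLG}) and uniform irreducibility (hence hyperplane diffuseness via Proposition~\ref{propositiondiffuse}); your sketch lacks any mechanism producing it.

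Two smaller points. First, goodness of $\pp$ gives only the open set condition on each $I(\pp,x)$; iterating does not upgrade this to strong separation. The paper post-composes each $\phi_\omega$ with a fixed $\phi_\tau$ satisfying $\phi_\tau([0,1]^d)\subset(0,1)^d$ to force strong goodness of $\Psi_N$. Second, rather than an ad hoc multiscale count, the paper proves once and for all the formula $\underline\AD(\Lambda_\Psi)=\sum_i\min_\aa\dim(\Psi_{i,\aa})$ for strongly Lalley--Gatzouras sponges (Theorem~\ref{theoremassdimLG}) and then only needs the fiber-cardinality estimate above; this cleanly separates the geometric input from the measure-theoretic one.
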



Here, following \cite{DasSimmons1}, we use the term ``Lalley--Gatzouras'' to refer to a certain class of sponges that includes the carpets considered by \name{Steven}{Lalley} and \name{Dimitrios}{Gatzouras}:

\begin{definition}[Cf. {\cite[Definition 3.6]{DasSimmons1}}]
\label{definitionLG}
A self-affine sponge $\Lambda_\Phi$ is \emph{Lalley--Gatzouras} (resp. \emph{strongly Lalley--Gatzouras}) if there exists a permutation $\sigma$ of $D$ such that both of the following hold:
\begin{itemize}
\item (Coordinate ordering condition) For all $\aa\in E$, we have
\[
|\phi_{\aa,\sigma(1)}'| > |\phi_{\aa,\sigma(2)}'| > \cdots > |\phi_{\aa,\sigma(d)}'|;
\]
\item (Disjointness condition) The coordinate sets $\sigma(I_{\leq i})$ ($i = 1,\ldots,d$) are all good (resp. strongly good), where $I_{\leq i} \df \{1,\ldots,i\}$.
\end{itemize}
\end{definition}

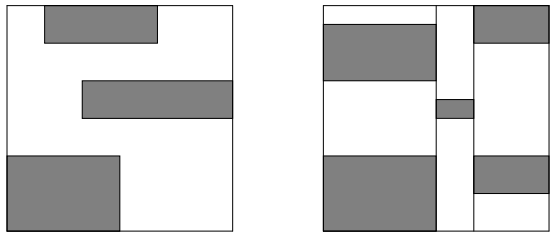
\begin{figure}
\scalebox{0.5}{
\begin{tikzpicture}[line cap=round,line join=rounr]
\clip(-1,-1) rectangle (7,7);
\fill[gray] (0,0) -- (0,2) -- (3,2) -- (3,0) -- cycle;
\fill[gray] (1,5) -- (1,6) -- (4,6) -- (4,5) -- cycle;
\fill[gray] (2,3) -- (2,4) -- (6,4) -- (6,3) -- cycle;
\draw (0,0)-- (0,6);
\draw (6,0)-- (6,6);
\draw (0,0)-- (6,0);
\draw (0,6)-- (6,6);
\draw (0,0) -- (0,2) -- (3,2) -- (3,0) -- cycle;
\draw (1,5) -- (1,6) -- (4,6) -- (4,5) -- cycle;
\draw (2,3) -- (2,4) -- (6,4) -- (6,3) -- cycle;
\end{tikzpicture}}
\scalebox{0.5}{
\begin{tikzpicture}[line cap=round,line join=rounr]
\clip(-1,-1) rectangle (7,7);
\fill[gray] (0,0) -- (0,2) -- (3,2) -- (3,0) -- cycle;
\fill[gray] (0,4) -- (0,5.5) -- (3,5.5) -- (3,4) -- cycle;
\fill[gray] (4,1) -- (4,2) -- (6,2) -- (6,1) -- cycle;
\fill[gray] (4,5) -- (4,6) -- (6,6) -- (6,5) -- cycle;
\fill[gray] (3,3) -- (3,3.5) -- (4,3.5) -- (4,3) -- cycle;
\draw (0,0)-- (0,6);
\draw (6,0)-- (6,6);
\draw (0,0)-- (6,0);
\draw (0,6)-- (6,6);
\draw (0,0) -- (0,2) -- (3,2) -- (3,0) -- cycle;
\draw (0,4) -- (0,5.5) -- (3,5.5) -- (3,4) -- cycle;
\draw (4,1) -- (4,2) -- (6,2) -- (6,1) -- cycle;
\draw (4,5) -- (4,6) -- (6,6) -- (6,5) -- cycle;
\draw (3,3) -- (3,3.5) -- (4,3.5) -- (4,3) -- cycle;
\draw (3,0)-- (3,6);
\draw (4,0)-- (4,6);
\end{tikzpicture}}
\caption{Generating templates for two carpets satisfying the coordinate ordering condition. The picture on the right also satisfies the disjointness condition, making it a Lalley--Gatzouras carpet.}
\label{figureBMLGB}
\end{figure}

\section{Some results on Lalley--Gatzouras sponges}
\label{sectionLG}

In this section, we give a necessary and sufficient condition for the hyperplane diffuseness of a strongly Lalley--Gatzouras sponge $\Lambda_\Phi$ (which will be fixed throughout the section), as well as a formula for the the lower Assouad dimension of $\Lambda_\Phi$.  For conceptual completeness we also state the formula for the upper Assouad dimension of $\Lambda_\Phi$, which is defined as follows:

\begin{definition}
The \emph{upper Assouad dimension} of a nonempty closed set $K \subset \R^d$, denoted $\overline\AD(K)$, is the infimum of $s \geq 0$ such that there exists $C > 0$ such that for all $\xx\in K$ and $0 < \beta,\rho \leq 1$, we have
\[
N_{\beta\rho}\big(B(\xx,\rho)\cap K\big) \leq C \beta^{-s},
\]
where $N_{\beta\rho}$ is as in Definition \ref{definitionlowerA}. Equivalently,
\begin{align*}
\overline\AD(K)
&= \limsup_{\beta \to 0} \sup_{0 < \rho \leq 1} \sup_{\xx\in K} \frac{\log N_{\beta\rho}(B(\xx,\rho)\cap K)}{-\log(\beta)}\\
&= \limsup_{\beta \to 0} \limsup_{\rho \to 0} \sup_{\xx\in K} \frac{\log N_{\beta\rho}(B(\xx,\rho)\cap K)}{-\log(\beta)}
\end{align*}
(cf. Appendix \ref{sectionassouad}).
\end{definition}

\begin{definition}
\label{definitionunifirred}
The sponge $\Lambda_\Phi$ is called \emph{uniformly irreducible} if for all $i\in D$ and $\aa\in E$, there exists $\bb\in E$ such that $b_i \neq a_i$ but \irredabi, where the partial order $\prec$ is the same as in Definition \ref{definitionirreducible}.
\end{definition}

For example, the irreducible Bara\'nski sponge $\Lambda_{\Phi^{(1)}}$ appearing in Example \ref{examplecarpet} is not uniformly irreducible, since if $i = 1$ and $\aa = (1,1)$ then there is no $\bb\in E^{(1)}$ such that $b_i\neq a_i$ but \irredabi. On the other hand, if $E^{(3)} = \{(0,0),(1,1),(2,0),(2,1)\}$, then the corresponding Bara\'nski sponge $\Lambda_{\Phi^{(3)}}$ is uniformly irreducible.

In the remainder of this section, we assume without loss of generality that the permutation $\sigma$ appearing in Definition \ref{definitionLG} is trivial, i.e. that the orders $\prec$ and $\leq$ on $D$ are equivalent. (Note that this is not true for the sponges of Example \ref{examplecarpet}.)

\begin{proposition}
\label{propositiondiffuse}
The sponge $\Lambda_\Phi$ is hyperplane diffuse if and only if it is uniformly irreducible.
\end{proposition}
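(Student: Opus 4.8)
The plan is to prove the two implications separately, keeping the section's standing assumption that the permutation $\sigma$ of Definition \ref{definitionLG} is trivial, so that $i\prec j$ iff $i\le j$ and $\{j\in D:j\succ i\}=\{i,i+1,\dots,d\}$. I would first fix notation: let $\gamma_{\min}=\min_{\aa,i}|\phi_{\aa,i}'|$, $\gamma_{\max}=\max_{\aa,i}|\phi_{\aa,i}'|$ (both in $(0,1)$), write $|\phi_{\omega\given n,i}'|=\prod_{k=1}^{n}|\phi_{\omega_k,i}'|$ for the coordinate-$i$ contraction of a depth-$n$ cylinder along a word $\omega$, and set $r=\max\{|\phi_{\aa,j}'|/|\phi_{\aa,i}'|:\aa\in E,\ i<j\}$, which is $<1$ \emph{because the coordinate ordering condition is strict}. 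Let $\eta>0$ be the smallest gap occurring between two disjoint closed intervals $\phi_{i,a}([0,1])$, $\phi_{i,a'}([0,1])$ of a base IFS; this is positive exactly because we are in the \emph{strongly} good setting ($\I=[0,1]$ in Definition \ref{definitiongood}), and this is the only place strong goodness rather than mere goodness is used. The arithmetic engine behind both directions is a length bookkeeping across coordinates: in a depth-$m$ cylinder whose coordinate-$i$ side has length $\asymp\rho$ one has $m\gtrsim\log(1/\rho)$, so every coordinate $j>i$ has contracted there by an extra factor $\le r^{m}\le\rho^{c}$ for a constant $c=c(\Phi)>0$, hence to scale $\lesssim\rho^{1+c}\ll\rho$; dually, for every $k<i$ and $\aa\in E$ the strict ordering gives a term-by-term bound $|\phi_{\aa,i}'|\le|\phi_{\aa,k}'|^{p}$ for a constant $p=p(\Phi)>1$, so $|\phi_{\omega\given n,i}'|\le|\phi_{\omega\given n,k}'|^{p}$ for every $\omega$.

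For ``uniformly irreducible $\Rightarrow$ hyperplane diffuse'', I would fix $\xx=\pi(\omega)\in\Lambda_\Phi$, a small scale $\rho\le\rho_0$, and a hyperplane $\LL$ with unit normal $u$, so $\dist(\yy,\LL)=|\langle\yy-\xx,u\rangle\pm\dist(\xx,\LL)|$. If $\dist(\xx,\LL)>\beta\rho$ take $\xx$ itself as witness; otherwise pick $i$ with $|u_i|\ge1/\sqrt d$ and choose $m$ with $\ell\df|\phi_{\omega\given m,i}'|\in(\gamma_{\min}\rho/(2\sqrt d),\,\rho/(2\sqrt d)]$, so $m\gtrsim\log(1/\rho)$. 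Applying Definition \ref{definitionunifirred} (valid since uniform irreducibility holds for every coordinate) to the coordinate $i$ and the digit $\omega_{m+1}$ gives $\bb\in E$ with $b_i\ne(\omega_{m+1})_i$ and $b_j=(\omega_{m+1})_j$ for $j<i$; set $\omega'=\omega_1\cdots\omega_m\,\bb\,\omega_{m+2}\omega_{m+3}\cdots$. Then $\pi_j(\omega')=\pi_j(\xx)$ for $j<i$; for $j=i$, strong goodness of $\{1,\dots,i\}$ (the coordinate-$\{1,\dots,i\}$ boxes of $\omega_{m+1}$ and $\bb$ agree in coordinates $<i$, hence their coordinate-$i$ sub-intervals are disjoint and $\ge\eta$ apart) gives $\eta\ell\le|\pi_i(\omega')-\pi_i(\xx)|\le\ell$; and for $j>i$ both $\pi_j(\omega')$ and $\pi_j(\xx)$ lie in one coordinate-$j$ cylinder interval of length $|\phi_{\omega\given m,j}'|\le\ell\,r^{m}\le\rho^{1+c}$. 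Hence $\|\pi(\omega')-\xx\|^2\le\ell^2+(d-1)\rho^{2+2c}\le\rho^2$ for $\rho_0$ small, so $\pi(\omega')\in B(\xx,\rho)$, while $\dist(\pi(\omega'),\LL)\ge|u_i|\eta\ell-\dist(\xx,\LL)-\sum_{j>i}|\pi_j(\omega')-\pi_j(\xx)|\ge(c_1-2\beta)\rho$ with $c_1=\eta\gamma_{\min}/(2d)$; this exceeds $\beta\rho$ once $\beta\df c_1/4$ and $\rho_0$ is chosen so that $(d-1)\rho_0^{c}\le c_1/4$. This proves diffuseness for $\rho\le\rho_0$, and the scales $\rho\in(\rho_0,1]$ follow immediately by applying the case $\rho=\rho_0$ and shrinking the constant to $\beta\rho_0$.

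For the converse, suppose uniform irreducibility fails, witnessed by a coordinate $i$ and a digit $\aa\in E$ such that every $\bb\in E$ agreeing with $\aa$ in coordinates $1,\dots,i-1$ also has $b_i=a_i$. If $i=1$ this forces every element of $E$ to have first digit $a_1$, so $\Lambda_\Phi$ lies in the hyperplane $\{y_1=q\}$ ($q$ the fixed point of $\phi_{1,a_1}$), which is trivially not hyperplane diffuse. Assume $i\ge2$ and take $\xx=\pi(\aa\aa\cdots)$, whose $i$-th coordinate is the fixed point of $\phi_{\aa,i}$. Given any $\omega'\in E^\N$ with $\pi(\omega')\in B(\xx,\rho)$, let $s\in\{1,2,\dots\}\cup\{\infty\}$ be the first level at which $\omega'$ leaves the coordinate-$\{1,\dots,i-1\}$ column of $\aa$. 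For $k<s$ the digit $\omega'_k$ still lies in that column, so the witnessing property forces $(\omega'_k)_i=a_i$, whence $|\pi_i(\omega')-\pi_i(\xx)|\le|\phi_{\aa,i}'|^{s-1}$ (with equality to $0$ if $s=\infty$). If $s<\infty$, then at level $s$ strong goodness of $\{1,\dots,i-1\}$ forces $\pi(\omega')$ to differ from $\xx$ by at least $\eta|\phi_{\aa,k}'|^{s-1}$ in some coordinate $k<i$, so $\pi(\omega')\in B(\xx,\rho)$ forces $|\phi_{\aa,k}'|^{s-1}\le\rho/\eta$, and then $|\phi_{\aa,i}'|\le|\phi_{\aa,k}'|^{p}$ gives $|\phi_{\aa,i}'|^{s-1}\le(\rho/\eta)^{p}$ (for $\rho\le\eta$). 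In all cases $|\pi_i(\omega')-\pi_i(\xx)|\le(\rho/\eta)^{p}$, so $B(\xx,\rho)\cap\Lambda_\Phi$ lies in the $(\rho/\eta)^{p}$-thickening of $\{y_i=\pi_i(\xx)\}$. Since $p>1$, for any prescribed $\beta>0$ one makes $(\rho/\eta)^{p}\le\beta\rho$ by taking $\rho\le(\beta\eta^{p})^{1/(p-1)}$ (and $\rho\le\eta$), so $\beta$-diffuseness fails; as $\beta$ was arbitrary, $\Lambda_\Phi$ is not hyperplane diffuse.

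The step I expect to be the main obstacle is the quantitative length bookkeeping of the first paragraph together with its calibration against $\eta$: one must verify that at the depth where coordinate $i$ has shrunk to scale $\rho$ the strictly faster coordinates $j>i$ have shrunk to a genuinely smaller scale $\rho^{1+c}$ — and dually extract the exponent $p>1$ from the strictly slower coordinates — and then choose the final constants $\beta,\rho_0$ so that the witness-point estimate in one direction and the thin-slab estimate in the other actually close. The remaining points — the granularity in the choice of $m$, the passage between the sup norm and the Euclidean norm (a harmless factor $\sqrt d$), and the large scales $\rho\in(\rho_0,1]$ — are routine.
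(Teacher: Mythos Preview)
Your proof is correct, and the forward direction (the contrapositive argument using the periodic point $\pi(\aa^\infty)$) is essentially the paper's own argument. The backward direction, however, takes a genuinely different route from the paper's.

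The paper does not select a coordinate based on the normal direction. Instead it builds, for \emph{every} coordinate $i\in D$, a companion point $\yy^{(i)}\in\Lambda_\Phi\cap B(\xx,\rho)$ by swapping the letter at depth $N_i+1$, where $N_i$ is the first depth at which the coordinate-$i$ cylinder has shrunk to scale $\le\rho$. One then has $y^{(i)}_j=x_j$ for $j<i$ and $|y^{(i)}_i-x_i|\asymp\rho$, so the matrix $M=\big(y^{(i)}_j-x_j\big)_{i,j}$ is upper triangular with diagonal $\asymp\rho$ and all entries $\le\rho$; consequently $\|M^{-1}\|\lesssim\rho^{-1}$, and for any unit normal $\vv$ some row satisfies $|(\yy^{(i)}-\xx)\cdot\vv|\gtrsim\rho$. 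This linear-algebra step replaces your ``the faster coordinates have already shrunk to $\rho^{1+c}$'' estimate entirely: the paper never needs that the coordinates $j>i$ are at a strictly smaller scale, only that they are $\le\rho$, and the diffuseness constant $\beta$ it obtains is valid for all $0<\rho\le1$ with no preliminary $\rho\le\rho_0$ regime.

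What your approach buys is that it produces a single explicit witness once the hyperplane is given, and avoids inverting a matrix; the price is the extra quantitative input (the exponent $c>0$ coming from the strict coordinate ordering) and the two-regime split in $\rho$. What the paper's approach buys is uniformity in $\rho$ and a cleaner constant, at the cost of constructing $d$ points and invoking the triangular-matrix bound. Both arguments rely on the same structural facts---uniform irreducibility to supply the alternate letter, and strong goodness of $I_{\le i}$ to guarantee the $\eta$-gap in coordinate $i$---so the difference is purely in how the hyperplane direction is handled.
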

We will use the backwards direction of this proposition in the proof of Theorem \ref{maintheorem3}; we include the proof of the forwards direction for completeness.
\begin{proof}[Proof of backwards direction]
Fix $\omega\in E^\N$ and $0 < \rho \leq 1$, and let $\xx = \pi(\omega)$; we will prove that
\begin{equation}
\label{Hdiffuse}
\Lambda_\Phi \cap B(\xx,\rho) \butnot\thickvar\LL{\beta\rho} \neq \emptyset
\end{equation}
for any hyperplane $\LL$, where $\beta > 0$ is an appropriate constant. For each $i\in D$, let $N_i\in\N$ be the smallest number such that
\begin{equation}
\label{Nidef}
\prod_{n = 1}^{N_i} |\phi_{\omega_n,i}'| \leq \rho,
\end{equation}
and note that by the coordinate ordering condition, we have $N_1 \geq N_2 \geq \cdots \geq N_d$. Let
\begin{equation}
\label{notations}
\begin{split}
[\aa]_i &= \{\bb\in E : a_i = b_i\}\\
[\omega\given N]_i &\df \{\tau\in E^\N : \tau_n \in [\omega_n]_i \all n \leq N\}\\
B_\omega(N_1,\ldots,N_d) &\df \bigcap_{i\in D} [\omega\given N_i]_i.
\end{split}
\end{equation}
Then by \eqref{Nidef},
\[
B_\omega(N_1,\ldots,N_d) \subset \pi^{-1}\big(B(\pi(\omega),\rho)\big).
\]
(Here for convenience we work with the max norm on $\R^d$.) Now fix $i\in D$ and let $\aa = \omega_{N_i + 1}$. Let $\bb\in E$ be as in Definition \ref{definitionunifirred}. Define the point $\tau\in E^\N$ as follows: let $\tau_n = \omega_n$ for all $n\neq N_i + 1$, and let $\tau_{N_i + 1} = \bb$. Finally, let $\yy^{(i)} = \pi(\tau)$.

Fix $j\in D$. If $a_j = b_j$, then clearly $\tau\in [\omega\given N_j]_j$. On the other hand, if $a_j \neq b_j$, then $j\geq i$ and thus $N_j \leq N_i$, which implies $\tau\in [\omega\given N_j]_j$. So either way we have $\tau\in [\omega\given N_j]_j$, and thus $\tau\in B_\omega(N_1,\ldots,N_d)$. Consequently, $\yy^{(i)} \in B(\xx,\rho)$.

If $j < i$, then $a_j = b_j$ and thus $x_j = y^{(i)}_j$. On the other hand, since $a_i \neq b_i$ and since $I_{\leq i}$ is strongly good, we have $\max_{j\leq i} |y^{(i)}_j - x_j| \asymp \rho$. Combining these facts gives $|y^{(i)}_i - x_i| \asymp \rho$. Since $i,j$ were arbitrary, this means that the matrix $M = (y^{(i)}_j - x_j)_{i,j} = \sum_{i\in D} (\ee^{(i)})\cdot(\yy^{(i)} - \xx)^T$ is upper triangular and its diagonal entries are asymptotic to $\rho$, while all of its entries are bounded in magnitude by $\rho$. Here $\ee^{(i)}$ denotes the (column) vector whose $i$th entry is $1$ (and whose other entries are $0$). This implies that
\[
\|M^{-1}\| \lesssim \rho^{-1}.
\]
So if $\vv\in\R^d$ is any unit vector, then $\|M\vv\| \gtrsim \rho$ and thus there exists $i\in D$ such that $|(\yy^{(i)} - \xx) \cdot \vv| \gtrsim \rho$. It follows that there exists a constant $\beta > 0$ (independent of $\xx,\rho,\vv$) such that
\begin{equation}
\label{2betarho}
\dist(\yy^{(i)} - \xx,\vv^\perp) > 2\beta\rho.
\end{equation}
Now if $\LL \subset \R^d$ is a hyperplane, then we can write $\LL = \pp + \vv^\perp$ for some $\pp\in\R^d$ and some unit vector $\vv$, and then \eqref{2betarho} implies that
\[
\max(\dist(\xx,\LL),\dist(\yy^{(i)},\LL)) > \beta\rho.
\]
Since $\xx,\yy^{(i)} \in \Lambda_\Phi \cap B(\xx,\rho)$, this demonstrates \eqref{Hdiffuse}, completing the proof.
\end{proof}

\begin{proof}[Proof of forwards direction]
By contradiction suppose that $\Lambda_\Phi$ is not uniformly irreducible. Then there exist $i \in D$ and $\aa\in E$ such that for all $\bb\in E$ satisfying $b_i \neq a_i$, we have $\{j\in D : a_j \neq b_j\} \nsubset \{j \in D : j \geq i\}$. Let $\omega = \aa^\infty$, $\xx = \pi(\omega)$, and $\LL = \xx + \sum_{j\neq i} \R \ee^{(j)}$. We claim that for all $0 < \rho \leq 1$,
\begin{equation}
\label{nondiffuse}
\Lambda_\Phi \cap B(\xx,\rho) \subset \NN(\LL,C \rho^\alpha),
\end{equation}
where $C > 0$ and $\alpha > 1$ are constants. This implies that $\Lambda_\Phi$ is not hyperplane diffuse.

Indeed, fix $0 < \rho \leq 1$, and for each $j\in D$, let $N_j\in\N$ be the largest number such that
\[
\prod_{n = 1}^{N_j} |\phi_{\omega_n,j}'| = |\phi_{\aa,j}'|^{N_j} \geq \epsilon^{-1} \rho,
\]
where
\begin{equation}
\label{epsilondef}
\epsilon = \min_{\substack{I = I_{\leq j} \\ j\in D}} \min_{\substack{\aa,\bb\in \pi_I(E) \\ \text{distinct}}} \dist\big(\phi_{I,\aa}([0,1]^I),\phi_{I,\bb}([0,1]^I)\big).
\end{equation}
Since $\Lambda_\Phi$ satisfies the disjointness condition, we have $\epsilon > 0$. As before we have $N_1 \geq N_2 \geq \cdots \geq N_d$. We let the notations $[\aa]_i$, $[\omega\given N]_i$, and $B_\omega(N_1,\ldots,N_d)$ be as in the previous proof, but this time our definition of $N_j$ implies that
\[
\pi^{-1}\big(B(\pi(\omega),\rho)\big) \subset B_\omega(N_1,\ldots,N_d).
\]
Fix $\tau \in \pi^{-1}\big(B(\pi(\omega),\rho)\big)$, and we will estimate $\dist(\yy,\LL)$, where $\yy = \pi(\tau)$. Fix $n \leq N_{i - 1}$ (with the convention that $N_0 = \infty$), and let $\bb = \tau_n$. Since $\tau \in B_\omega(N_1,\ldots,N_d)$, we have $b_j = \tau_{n,j} = \omega_{n,j} = a_j$ for all $j < i$. By the definition of $i$, this implies that $b_i = a_i$, and thus $\tau_{n,i} = \omega_{n,i}$ for all $n \leq N_{i - 1}$. It follows that
\[
\dist(\yy,\LL) = |y_i - x_i| \leq \prod_{n = 1}^{N_{i - 1}} |\phi_{\omega_n,i}'| = |\phi_{\aa,i}'|^{N_{i - 1}}.
\]
If $i = 1$, then we have shown that $\dist(\yy,\LL) = 0$. Suppose $i > 1$. Since $\Lambda_\Phi$ satisfies the coordinate ordering condition, we have
\[
\alpha \df \frac{\log|\phi_{\aa,i}'|}{\log|\phi_{\aa,i - 1}'|} > 1.
\]
On the other hand, the definition of $N_{i - 1}$ implies that $|\phi_{\aa,i - 1}'|^{N_{i - 1} + 1} < \epsilon^{-1} \rho$, so
\[
\dist(\yy,\LL) \leq |\phi_{\aa,i - 1}'|^{N_{i - 1} \alpha} \lesssim \rho^\alpha,
\]
demonstrating \eqref{nondiffuse}.
\end{proof}

If $\Lambda_\Phi$ is irreducible but not uniformly irreducible, then in the next section we will show that $\Lambda_\Phi$ contains uniformly irreducible subsponges. So in this case, even though $\Lambda_\Phi$ is not hyperplane diffuse it contains hyperplane diffuse subsets. On the other hand:

\begin{proposition}
\label{propositionnoHDsubsets}
If $\Lambda_\Phi$ is reducible, then it contains no hyperplane diffuse subsets.
\end{proposition}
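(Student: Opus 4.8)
The plan is to show that reducibility forces $\Lambda_\Phi$ to be polynomially flat in one fixed coordinate direction near every one of its points, and then to observe that such flatness is inherited by arbitrary subsets and is incompatible with hyperplane diffuseness. This refines the forwards direction of Proposition~\ref{propositiondiffuse}, where flatness was only produced near the fixed point of a single map $\phi_\aa$; here it must hold near an arbitrary point $\xx=\pi(\omega)$, with constants independent of $\omega\in E^\N$.

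First I would fix a coordinate $i\in D$ witnessing reducibility. Since $\prec$ and $\leq$ coincide on $D$ throughout this section, this means that every $\aa,\bb\in E$ with $a_i\neq b_i$ also satisfy $a_j\neq b_j$ for some $j<i$. If no two elements of $E$ differ in coordinate $i$, then $\phi_{\aa,i}$ is the same map for every $\aa\in E$, so every point of $\Lambda_\Phi$ has the same $i$-th coordinate, $\Lambda_\Phi$ is contained in an affine hyperplane, and the proposition is immediate (a set contained in a hyperplane has no nonempty hyperplane diffuse subset). Otherwise, applying reducibility to any $\aa,\bb\in E$ with $a_i\neq b_i$ produces some $j<i$, so $i>1$. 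In that case I would prove: there exist $C>0$ and $\alpha>1$, depending only on $\Phi$, such that for all $\xx\in\Lambda_\Phi$ and all sufficiently small $\rho>0$,
\[
\Lambda_\Phi\cap B(\xx,\rho)\subset\NN(\LL_\xx,C\rho^\alpha),\qquad \LL_\xx\df\{\yy\in\R^d:y_i=x_i\}.
\]
To prove this, fix $\omega\in E^\N$ with $\pi(\omega)=\xx$ and, exactly as in the proof of the forwards direction of Proposition~\ref{propositiondiffuse}, let $N_j$ be the largest integer with $\prod_{n\leq N_j}|\phi_{\omega_n,j}'|\geq\epsilon^{-1}\rho$, where $\epsilon$ is the separation constant of \eqref{epsilondef}; then $N_1\geq\cdots\geq N_d$ and $\pi^{-1}(B(\xx,\rho))\subset B_\omega(N_1,\ldots,N_d)$. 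If $\pi(\tau)\in B(\xx,\rho)$, write $\yy=\pi(\tau)$; then $\tau\in B_\omega(N_1,\ldots,N_d)$ forces $\tau_n$ and $\omega_n$ to agree in every coordinate $<i$ for all $n\leq N_{i-1}$, so by the choice of $i$ they also agree in coordinate $i$ for all such $n$, whence $|y_i-x_i|\leq\prod_{n\leq N_{i-1}}|\phi_{\omega_n,i}'|$. By the coordinate ordering condition, $\prod_{n\leq N_{i-1}}|\phi_{\omega_n,i}'|\leq\big(\prod_{n\leq N_{i-1}}|\phi_{\omega_n,i-1}'|\big)^\alpha$ with $\alpha\df\min_{\aa\in E}\big(\log|\phi_{\aa,i}'|/\log|\phi_{\aa,i-1}'|\big)>1$, while the maximality of $N_{i-1}$ gives $\prod_{n\leq N_{i-1}}|\phi_{\omega_n,i-1}'|\asymp\rho$; combining these yields the displayed inclusion.

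Granting the inclusion, the proposition follows quickly. Suppose a nonempty $K\subset\Lambda_\Phi$ were hyperplane diffuse, with constant $\beta>0$. Choose $\xx\in K$ and $\rho>0$ small enough that the inclusion above holds and $C\rho^{\alpha-1}\leq\beta$. Then $C\rho^\alpha\leq\beta\rho$, so $B(\xx,\rho)\cap K\subset B(\xx,\rho)\cap\Lambda_\Phi\subset\NN(\LL_\xx,C\rho^\alpha)\subset\NN(\LL_\xx,\beta\rho)$, i.e. $B(\xx,\rho)\cap K\butnot\NN(\LL_\xx,\beta\rho)=\emptyset$. This contradicts the defining property of hyperplane diffuseness of $K$, applied to the hyperplane $\LL_\xx$.

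The step I expect to be the main obstacle is making the constants in the inclusion genuinely uniform over $\omega\in E^\N$: one must check that the separation-based containment $\pi^{-1}(B(\xx,\rho))\subset B_\omega(N_1,\ldots,N_d)$ holds with a single $\epsilon>0$ (which is where the strongly good condition on the coordinate sets $I_{\leq j}$ enters) and that the exponent $\alpha$ can be chosen independently of $\omega$ (which is where finiteness of $E$ enters). Once this is arranged, the remaining manipulations with products of contraction ratios are the same as in the proof of Proposition~\ref{propositiondiffuse}.
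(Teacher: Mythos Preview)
Your proposal is correct and follows essentially the same approach as the paper: both fix the coordinate $i$ witnessing reducibility, repeat the ``second paragraph'' argument from the forwards direction of Proposition~\ref{propositiondiffuse} at an arbitrary point $\xx=\pi(\omega)$ (with $\aa=\omega_n$ now varying), and conclude that the uniform flatness $\Lambda_\Phi\cap B(\xx,\rho)\subset\NN(\LL_\xx,C\rho^\alpha)$ rules out any hyperplane diffuse subset. Your explicit choice $\alpha=\min_{\aa\in E}\log|\phi_{\aa,i}'|/\log|\phi_{\aa,i-1}'|$ makes precise a point the paper leaves implicit, and your separate treatment of the degenerate case (all of $E$ agreeing in coordinate $i$) corresponds to the paper's use of the convention $N_0=\infty$.
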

In this case, the techniques of Section \ref{sectiongames} cannot possibly be used to prove that $\BA_d \cap \Lambda_\Phi$ is large, because these techniques rely on finding hyperplane diffuse subsets of $\Lambda_\Phi$ (with sufficiently large lower Assouad dimension).
\begin{proof}
Since $\Lambda_\Phi$ is reducible, there exists $i\in D$ such that for all $\aa,\bb\in E$ such that $a_i \neq b_i$, there exists $j < i$ such that $a_j \neq b_j$. Fix $\xx = \pi(\omega) \in \Lambda_\Phi$, and let $\LL = \xx + \sum_{j\neq i} \R \ee^{(j)}$. Repeating the second paragraph of the proof of the forwards direction of Proposition \ref{propositiondiffuse} shows that \eqref{nondiffuse} holds for all $0 < \rho \leq 1$. (One minor change is needed: after fixing $n \leq N_{i - 1}$, we let $\aa = \omega_n$. Since the condition on $i$ now holds for all $\aa,\bb\in E$, the subsequent argument is still valid.) Since $\xx\in \Lambda_\Phi$ was arbitrary, \eqref{nondiffuse} implies that no subset of $\Lambda_\Phi$ is hyperplane diffuse.
%
\end{proof}

Next, we compute the upper and lower Assouad dimensions of $\Lambda_\Phi$. For each $i\in D$, let $\pi_i = \pi_{I_{<i}} = \pi_{I_{\leq i - 1}}$, and for each $\aa\in \pi_i(E)$, consider the ``fiber IFS''
\begin{equation}
\label{Phiia}
\Phi_{i,\aa} = (\phi_{i,b})_{b\in E_{i,\aa}} \text{ where } E_{i,\aa} = \{b\in A_i :(\aa,b)\in \pi_{i + 1}(E)\}.
\end{equation}
Note that $\pi_1(E) = \{\emptyset\}$ and $\pi_{d + 1}(E) = E$. We let $\dim(\Phi_{i,\aa})$ denote the dimension of the limit set of the IFS $\Phi_{i,\aa}$. (Since the limit set is Ahlfors regular, it does not matter what notion of fractal dimension we use.)

\begin{theorem}
\label{theoremassdimLG}
Recall that $\Lambda_\Phi$ denotes a strongly Lalley--Gatzouras sponge such that the orders $\prec$ and $\leq$ on $D$ are equivalent. For each $i\in D$ let
\begin{align*}
\underline\delta_i &= \min_{\aa\in \pi_i(E)} \dim(\Phi_{i,\aa})\\
\overline\delta_i &= \max_{\aa\in \pi_i(E)} \dim(\Phi_{i,\aa}).
\end{align*}
Then
\begin{align} \label{lowerAformula}
\underline\AD(\Lambda_\Phi)
&= \sum_{i\in D} \underline\delta_i\\ \label{upperAformula}
\overline\AD(\Lambda_\Phi)
&= \sum_{i\in D} \overline\delta_i
\end{align}
\end{theorem}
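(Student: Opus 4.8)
The plan is to estimate $N_{\beta\rho}(B(\xx,\rho)\cap\Lambda_\Phi)$ for an arbitrary $\xx = \pi(\omega)\in\Lambda_\Phi$ and arbitrary scales $\beta,\rho$, by analyzing the symbolic structure of the points of $\Lambda_\Phi$ lying in $B(\xx,\rho)$. As in the proof of Proposition \ref{propositiondiffuse}, for each $i\in D$ let $N_i = N_i(\omega,\rho)$ be the least integer with $\prod_{n=1}^{N_i}|\phi_{\omega_n,i}'|\leq\rho$; by the coordinate ordering condition $N_1\geq N_2\geq\cdots\geq N_d$, and the cylinder $B_\omega(N_1,\dots,N_d) = \bigcap_{i\in D}[\omega\given N_i]_i$ is comparable (up to bounded distortion) to $\pi^{-1}(B(\xx,\rho))$. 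Similarly, let $N_i' = N_i'(\omega,\beta\rho)$ be the analogous integers at scale $\beta\rho$, so $N_i'\geq N_i$ and $B_\omega(N_1',\dots,N_d')$ is comparable to $\pi^{-1}(B(\xx,\beta\rho))$. The key combinatorial observation is that a maximal $\beta\rho$-separated subset of $B(\xx,\rho)\cap\Lambda_\Phi$ has cardinality comparable to the number of cylinders of ``type $(N_1',\dots,N_d')$'' contained in $B_\omega(N_1,\dots,N_d)$, which by the product structure of a diagonal IFS and the strong disjointness condition equals
\[
\prod_{i\in D}\ \#\big\{\text{admissible choices of }(\tau_n)_{N_i < n\leq N_i'}\text{ in coordinate }i\big\}
= \prod_{i\in D}\ \prod_{n=N_i+1}^{N_i'}\#E_{i,\pi_i(\omega_n)} .
\]
Thus $\log N_{\beta\rho}(B(\xx,\rho)\cap\Lambda_\Phi)$ is, up to $O(1)$, a sum over $i\in D$ of a product of fiber-IFS branching numbers along the word $\omega$ between levels $N_i$ and $N_i'$.

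Next I would convert each factor into a dimension statement. Since each fiber IFS $\Phi_{i,\aa}$ is a self-similar IFS on the line satisfying the strong separation condition (this is where ``strongly'' Lalley--Gatzouras is used), its limit set is Ahlfors $\dim(\Phi_{i,\aa})$-regular, and the number of its level-$k$ cylinders of a given contraction scale $t$ is comparable to $t^{-\dim(\Phi_{i,\aa})}$. The scales involved in the block $N_i < n\leq N_i'$ in coordinate $i$ multiply to something comparable to $\beta$ (by the definitions of $N_i$ and $N_i'$, each being the first time a product of coordinate-$i$ contraction ratios drops below $\rho$ resp.\ $\beta\rho$, and using that all coordinate ratios are bounded away from $0$ and $1$). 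Hence each factor is comparable to $\beta^{-\delta}$ where $\delta$ is a weighted combination of the numbers $\dim(\Phi_{i,\aa})$ with $\aa$ ranging over the prefixes $\pi_i(\omega_n)$ occurring in that block. To get the uniform lower bound $\underline\AD(\Lambda_\Phi)\geq\sum_i\underline\delta_i$ we bound each $\dim(\Phi_{i,\pi_i(\omega_n)})\geq\underline\delta_i$, and for the matching upper bound $\underline\AD(\Lambda_\Phi)\leq\sum_i\underline\delta_i$ we exhibit a bad point and bad scales: pick $\omega$ so that in each coordinate $i$ the prefix realizing the minimum $\dim(\Phi_{i,\aa}) = \underline\delta_i$ is repeated (iterating a single symbol $\aa^*$ whose coordinatewise fiber dimensions are minimal, or, if no single symbol works, a periodic word chosen so that each coordinate sees mostly its minimizing fiber), and let $\beta\to 0$ along $\rho$ fixed; then the count is $\asymp\prod_i\beta^{-\underline\delta_i}$. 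The upper Assouad formula \eqref{upperAformula} is entirely symmetric: replace ``$\geq\underline\delta_i$'' by ``$\leq\overline\delta_i$'' for the general upper bound, and iterate the maximizing prefixes for the matching lower bound.

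The main technical obstacle is making the heuristic ``$N_{\beta\rho}(B(\xx,\rho)\cap\Lambda_\Phi)\asymp$ (product of fiber branching numbers)'' fully rigorous, uniformly over $\xx,\beta,\rho$. Two points need care. First, the passage between balls in $\R^d$ and symbolic cylinders involves bounded multiplicative distortion constants depending on $\max_{i,a}|\phi_{i,a}'|$ and $\min_{i,a}|\phi_{i,a}'|$, and on $d$; these must be tracked through the product over $d$ coordinates, but since $d$ is fixed they only contribute an $O(1)$ additive error to $\log N_{\beta\rho}$, which washes out after dividing by $-\log\beta$ and taking the liminf. Second, and more delicate, is the \emph{separation}: one must check that distinct admissible refinements of type $(N_i')$ inside $B_\omega(N_1,\dots,N_d)$ give points that are genuinely $\gtrsim\beta\rho$-apart in $\R^d$ (so the count is a valid lower bound for $N_{\beta\rho}$) and conversely that no more than $O(1)$ of them can be $\beta\rho$-close (so it is also an upper bound). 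The first uses the strong separation condition of the relevant fiber IFSes together with the coordinate ordering condition (a difference first appearing in coordinate $i$ forces, via strong goodness of $I_{\leq i}$, a gap of order $\prod_{n\leq N_i'}|\phi_{\omega_n,i}'|\asymp\beta\rho$ in the first $i$ coordinates); the second is a standard bounded-overlap argument. Once these two estimates are in place, both formulas follow by taking logarithms, dividing by $-\log\beta$, and computing the liminf (resp.\ limsup) over $\beta,\rho,\xx$ using the Cesàro-type averages of the fiber dimensions along $\omega$.
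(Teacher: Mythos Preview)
Your overall shape is right, but there is a structural gap in the displayed product formula
\[
\prod_{i\in D}\ \prod_{n=N_i+1}^{N_i'}\#E_{i,\pi_i(\omega_n)}.
\]
This expression tacitly assumes that for every level $n$ in the block $N_i<n\le N_i'$ the prefix $\pi_i(\tau_n)$ of a refinement $\tau$ equals $\pi_i(\omega_n)$. That is only true when $n\le N_{i-1}$, since the constraint $\tau\in B_\omega(N_1,\dots,N_d)$ fixes coordinate $j$ of $\tau_n$ only for $n\le N_j$. If $N_i'>N_{i-1}$ (which happens whenever $\beta$ is small and $\rho$ is not), then for $N_{i-1}<n\le N_i'$ the fiber set is $E_{i,\pi_i(\tau_n)}$, which depends on choices already made in lower coordinates, and the clean product over $i$ collapses. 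The paper confronts exactly this point: Claim~\ref{claimdelta} uses the \emph{strict} coordinate ordering to show that the coordinate-$i$ product has already dropped below $\beta\rho$ by level $N_{i-1}$ provided $\rho\le (C^{-1}\beta)^{1/\delta}$, so one may work entirely inside the window $N_i<n\le N_{i-1}$; the restriction to small $\rho$ is then legitimised by the equivalent $\liminf_{\rho\to 0}$ formula for $\underline\AD$ proved in Appendix~\ref{sectionassouad}. Without some analogue of this step your count is not defined at the scales that matter for $\underline\AD$.

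Two further points. First, even once the product structure is secured, your conversion ``$\prod_n\#E_{i,\pi_i(\omega_n)}\asymp\beta^{-\delta}$ via Ahlfors regularity'' mixes a count by \emph{depth} with a count by \emph{scale}: the fiber IFS varies with $n$, so Ahlfors regularity of a single $\Phi_{i,\aa}$ does not apply. The paper instead uses the pressure inequality $\sum_{b\in E_{i,\aa}}|\phi_{i,b}'|^{\underline\delta_i}\ge 1$ (valid for every $\aa$ because $\underline\delta_i$ is the minimum) to make $r_i^{\underline\delta_i}$ subadditive and then runs a Moran-cover count at scale $\beta\rho$. Second, your construction for the $\le$ direction does not work: there is in general no single symbol, and no periodic word, that simultaneously realises the minimum fiber dimension in every coordinate, because the minimiser for coordinate $i$ is an element of $\pi_i(E)$ and the relevant block of levels is $(N_i,N_{i-1}]$, which itself depends on $\omega$. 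The paper breaks this circularity by choosing $\aa^{(i)}\in E$ with $\dim(\Phi_{i,\pi_i(\aa^{(i)})})=\underline\delta_i$ and defining $N_d,N_{d-1},\dots,N_1$ by \emph{backward} recursion, setting $\omega_n=\aa^{(i)}$ on the block $N_i<n\le N_{i-1}$; this (non-periodic, $\rho$-dependent) word makes $r_i^{\underline\delta_i}$ additive on the relevant window and yields the matching upper bound.
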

The case $d = 2$ of Theorem \ref{theoremassdimLG} (i.e. carpets) was proven recently by \name{Jonathan}{Fraser} \cite[Theorems 2.12 and 2.13]{Fraser}, and the case of Sierpi\'nski sponges by \name{Jonathan}{Fraser} and \name{Douglas}{Howroyd} \cite{FraserHowroyd}. The case of general Bara\'nski sponges appears to be more subtle.

We will prove only \eqref{lowerAformula}, since that is the equation that we will need in the proof of Theorem \ref{maintheorem3}. (To be precise, we only need the $\geq$ direction of \eqref{lowerAformula}.) The proof of \eqref{upperAformula} is similar.

\begin{proof}[Proof of $\geq$ direction]
Fix $\omega\in E^\N$ and $0 < \beta,\rho \leq 1$, and let $\xx = \pi(\omega)$. For each $i \in D$, let $N_i\in\N$ be the smallest number such that \eqref{Nidef} holds. Fix $i \in D$, and consider the space
\[
X_i \df \prod_{n = 1}^{N_i} \{\omega_{n,i}\} \times \prod_{n = N_i + 1}^{N_{i - 1}} E_{i,\pi_i(\omega_n)}.
\]
Here we use the convention that $N_0 = \infty$. Let $r_i$ be the map
\[
r_i([\tau\given N]) = \prod_{n = 1}^N |\phi_{\tau_n,i}'|,
\]
i.e. up to a constant, $r_i$ sends a cylinder in $X_i$ to the diameter of the $i$th coordinate of its image under the coding map. Here $\tau$ denotes any element of $E^\N$.

\begin{claim}
\label{claimdelta}
If $[\tau\given N_{i - 1}]$ is a maximal-length cylinder of $X_i$, then
\begin{equation}
\label{maximalcylinders}
r_i([\tau\given N_{i - 1}]) \lesssim \rho^{1 + \delta}
\end{equation}
where $\delta > 0$ is a constant.
\end{claim}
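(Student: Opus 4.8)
The plan is to split the product defining $r_i$ at the threshold index $N_i$. On the first $N_i$ coordinates every element of $X_i$ is forced to agree with $\omega$, so that part of the product equals $\prod_{n=1}^{N_i}|\phi_{\omega_n,i}'|$, which is $\leq \rho$ by the minimality in the definition of $N_i$. The remaining $N_{i-1}-N_i$ coordinates are free, but each of them still contributes a contraction of at least a fixed size; thus the whole point of the proof is to show that there are enough of them, namely $N_{i-1}-N_i \gtrsim \log(1/\rho)$, so that this tail supplies the extra factor $\rho^{\delta}$.

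To carry this out I would first record the constants
\[
\gamma = \min_{\aa\in E,\ 2\leq j\leq d}\frac{\log|\phi_{\aa,j}'|}{\log|\phi_{\aa,j-1}'|} > 1,\qquad \ell_i = \min_{\aa\in E}\bigl(-\log|\phi_{\aa,i}'|\bigr) > 0,\qquad L_i = \max_{\aa\in E}\bigl(-\log|\phi_{\aa,i}'|\bigr),
\]
where $\gamma > 1$ is exactly the coordinate ordering condition together with the finiteness of $E$. Writing $S_j(N) = \sum_{n=1}^N\bigl(-\log|\phi_{\omega_n,j}'|\bigr)$, the defining property of $N_j$ becomes $S_j(N_j)\geq \log(1/\rho) > S_j(N_j-1)$, so in particular $S_j(N_j) < \log(1/\rho) + L_j$. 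Next, the coordinate ordering condition gives the termwise bound $-\log|\phi_{\omega_n,i}'|\geq \gamma\bigl(-\log|\phi_{\omega_n,i-1}'|\bigr)$, hence $S_i(N)\geq \gamma S_{i-1}(N)$ for every $N$, and taking $N = N_{i-1}$ yields $S_i(N_{i-1})\geq \gamma\log(1/\rho)$. Combining this with $S_i(N_i) < \log(1/\rho) + L_i$ and with $S_i(N_{i-1}) - S_i(N_i)\leq (N_{i-1}-N_i)L_i$ gives the desired lower bound
\[
N_{i-1} - N_i \geq \frac{(\gamma-1)\log(1/\rho)}{L_i} - 1 .
\]

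Finally, for a maximal-length cylinder $[\tau\given N_{i-1}]$ of $X_i$ I would estimate
\[
-\log r_i([\tau\given N_{i-1}]) = \sum_{n=1}^{N_i}\bigl(-\log|\phi_{\omega_n,i}'|\bigr) + \sum_{n=N_i+1}^{N_{i-1}}\bigl(-\log|\phi_{\tau_n,i}'|\bigr) \geq \log(1/\rho) + (N_{i-1}-N_i)\ell_i ,
\]
using $\tau_n = \omega_n$ in coordinate $i$ for $n\leq N_i$ for the first sum and the uniform bound $-\log|\phi_{\tau_n,i}'|\geq \ell_i$ for the second. Substituting the lower bound for $N_{i-1}-N_i$ gives $-\log r_i([\tau\given N_{i-1}])\geq \bigl(1 + (\gamma-1)\ell_i/L_i\bigr)\log(1/\rho) - \ell_i$, so \eqref{maximalcylinders} holds with $\delta = \min_{2\leq i\leq d}(\gamma-1)\ell_i/L_i > 0$ and an implied constant depending only on $\Phi$. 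For $i = 1$ the convention $N_0 = \infty$ makes the maximal-length cylinder infinite, so $r_1$ of it is $0$ and the claim is trivial there.

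The one step that genuinely requires care is the inequality $N_{i-1}-N_i \gtrsim \log(1/\rho)$: it cannot be obtained by naively counting scales, since the sets of values $\{|\phi_{\aa,i}'|\}$ and $\{|\phi_{\aa,i-1}'|\}$ may overlap, that is, there need be no uniform gap between the $i$th and $(i-1)$st contraction ratios of the IFS. What rescues the argument is that $N_{i-1}$ and $N_i$ are defined along the \emph{same} orbit $\omega$, so the pointwise comparison $-\log|\phi_{\omega_n,i}'|\geq \gamma(-\log|\phi_{\omega_n,i-1}'|)$ may be summed, and this is the place where the coordinate ordering condition is used in an essential way.
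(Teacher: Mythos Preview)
Your argument is correct and follows essentially the same route as the paper's own proof: split the product at $N_i$, use the coordinate ordering condition to force $S_i(N_{i-1})\geq \gamma\,S_{i-1}(N_{i-1})\geq \gamma\log(1/\rho)$, deduce a linear lower bound on $N_{i-1}-N_i$, and convert this into the extra factor $\rho^{\delta}$. The only differences are cosmetic: you work additively in $-\log$ coordinates and with coordinate-dependent constants $\ell_i,L_i$, whereas the paper stays multiplicative and uses global constants $\lambda_\pm$; and you correctly take $\gamma$ to be the \emph{minimum} of the ratios $\log|\phi_{\aa,j}'|/\log|\phi_{\aa,j-1}'|$, which is what is actually needed for the termwise inequality (the paper writes $\max$, evidently a typo, since the subsequent step $|\phi_{\omega_n,i}'|\le|\phi_{\omega_n,i-1}'|^{\alpha}$ requires $\alpha$ to be a lower bound for these ratios).
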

\begin{proof}
We have
\begin{align*}
r_i([\tau\given N_{i - 1}])
&= \left(\prod_{n = 1}^{N_i} |\phi_{\omega_n,i}'|\right)
\left(\prod_{n = N_i + 1}^{N_{i - 1}} |\phi_{\tau_n,i}'|\right) 
\leq \lambda_+^{N_{i - 1} - N_i} \rho,
\end{align*}
where $\lambda_+ = \max_{i\in D} \max_{b\in A_i} |\phi_{i,b}'| < 1$. Also,
\begin{align*}
r_i([\omega\given N_{i - 1}])
&= \left(\prod_{n = 1}^{N_i} |\phi_{\omega_n,i}'|\right)
\left(\prod_{n = N_i + 1}^{N_{i - 1}} |\phi_{\omega_n,i}'|\right)
\geq \lambda_-^{N_{i - 1} - N_i} \rho,
\end{align*}
where $\lambda_- = \min_{i\in D} \min_{b\in A_i} |\phi_{i,b}'| > 0$. On the other hand, if
\[
\alpha \df \max_{i\in D} \max_{\aa\in E} \frac{\log|\phi_{\aa,i}'|}{\log|\phi_{\aa,i - 1}'|} > 1,
\]
then
\begin{align*}
r_i([\omega\given N_{i - 1}])
&= \prod_{n = 1}^{N_{i - 1}} |\phi_{\omega_n,i}'|
\leq \prod_{n = 1}^{N_{i - 1}} |\phi_{\omega_n,i - 1}'|^\alpha \asymp \rho^\alpha.
\end{align*}
Combining these inequalities gives
\[
\lambda_-^{N_{i - 1} - N_i} \lesssim \rho^{\alpha - 1}
\]
and thus
\[
r_i([\tau\given N_{i - 1}]) \lesssim \rho^{1 + (\alpha - 1)\log(\lambda_+)/\log(\lambda_-)}.
\QEDmod\qedhere\]
\end{proof}

In the remainder of the proof we assume that $\rho \leq (C^{-1} \beta)^{1/\delta}$, where $C$ is the implied constant of \eqref{maximalcylinders}. Then
\[
r_i([\tau\given N_{i - 1}]) \leq \beta \rho
\]
for all maximal-length cylinders $[\tau\given N_{i - 1}]$ in $X_i$. It follows that the collection
\[
\PP_i = \{[\tau\given N] : r_i([\tau\given N]) \leq \beta\rho < r_i([\tau\given N - 1])\}
\]
is a partition of $X_i$.

Now let $s_i = \underline\delta_i$. By the definition of $\underline\delta_i$, we have
\[
\sum_{b\in E_{i,\aa}} |\phi_{i,b}'|^{s_i} \geq 1 \all \aa\in \pi_i(E),
\]
and thus the map $r_i^{s_i}$ is subadditive on cylinders of length at least $N_i$. So
\[
\rho^{s_i} \asymp r_i^{s_i}([\omega\given N_i]) \leq \sum_{P\in \PP_i} r_i^{s_i}(P) \asymp (\beta\rho)^{s_i} \#(\PP_i),
\]
i.e. $\#(\PP_i) \gtrsim \beta^{-{s_i}}$. Now let $\PP = \prod_{i = 1}^d \PP_i$, and let $\P(E^\N)$ denote the power set of $E^\N$. Define the map $\iota:\PP\to \P(E^\N)$ as follows:
\begin{equation}
\label{Pdef}
\iota([\tau_1\given M_1],\ldots,[\tau_d\given M_d])
= \bigcap_{i\in D} [\tau_i\given M_i]_i \subset B_\omega(N_1,\ldots,N_d).
\end{equation}
Then the sets $\pi(\iota(\bfP))$ ($\bfP\in \PP$) are contained in $B(\xx,\rho)$ and separated by distances $\gtrsim \beta\rho$. Thus
\[
N_{\beta\rho}(B(\xx,\rho)\cap \Lambda_\Phi) \gtrsim \#(\PP) = \prod_{i = 1}^d \#(\PP_i) \gtrsim \prod_{i = 1}^d \beta^{-s_i} = \beta\wedge\left(-\sum_{i = 1}^d \underline\delta_i\right),
\]
assuming that $\rho \leq (C^{-1} \beta)^{1/\delta}$. Here $\beta\wedge s$ denotes $\beta$ raised to the power of $s$. Taking the infimum over $\xx\in K$, the liminf as $\rho\to 0$, and then the liminf as $\beta\to 0$ completes the proof.
\end{proof}

\begin{proof}[Proof of $\leq$ direction]
For each $i\in D$, let $\aa^{(i)} \in E$ be chosen so that
\begin{equation}
\label{aidef}
\dim(\Phi_{i,\pi_i(\aa^{(i)})}) = \underline\delta_i.
\end{equation}
Fix $0 < \rho \leq 1$, and define the sequence $N_1,\ldots,N_d$ by backwards recursion: if $N_{i + 1},\ldots,N_d$ are defined, then let $N_i$ be the smallest integer such that
\[
\prod_{j = i}^d |\phi_{\aa^{(j)},i}'|^{N_j - N_{j + 1}} \leq \rho.
\]
Then let $\omega\in E^\N$ be the infinite word defined by the formula
\begin{equation}
\label{omegandef}
\omega_n = \aa^{(i)} \all i\in D \all n = N_i + 1,\ldots,N_{i - 1}.
\end{equation}
Note that for each $i\in D$, $N_i$ is the smallest integer that satisfies \eqref{Nidef}, i.e. $N_i$ has the same value in this proof as it did in the proof of the $\geq$ direction. Fix $i\in D$, and let $X_i$ and $r_i$ be as in the proof of the $\geq$ direction. By \eqref{omegandef} and \eqref{aidef}, $r_i$ is additive on cylinders rather than merely being subadditive. Moreover, since the sets $\pi(\iota(\bfP))$ ($\bfP\in \PP$) defined by \eqref{Pdef} have diameter $\lesssim \beta\rho$ and form a cover of $B(\xx,\lambda_- \epsilon\rho)\cap \Lambda_\Phi$ (here $\epsilon$ is as in \eqref{epsilondef}), we have $\asymp$ in the last calculation rather than just $\gtrsim$:
\begin{equation}
\label{Nbetarho}
N_{\beta\rho}\big(B(\xx,\lambda_- \epsilon\rho)\cap \Lambda_\Phi\big) \asymp \beta\wedge\left(-\sum_{i = 1}^d \underline\delta_i\right).
\end{equation}
The quantifiers on this statement are: for all $\rho$, there exists $\xx$ such that \eqref{Nbetarho} holds for all $\beta \geq C \rho^\delta$, where $C$ is the implied constant of \eqref{maximalcylinders} and $\delta$ is as in Claim \ref{claimdelta}. In particular, by varying  $\rho$ we can make $\beta$ arbitrarily small while still retaining \eqref{Nbetarho}. This completes the proof.
\end{proof}

\section{Proof of the main theorems}
\label{sectionproof}

In this section we prove Theorem \ref{maintheorem3}, thus indirectly proving Theorems \ref{maintheorem} and \ref{maintheorem2}, which are consequences of Theorem \ref{maintheorem3}.

We recall that in Theorem \ref{maintheorem3}, $\pp$ is an irreducible good measure with distinct Lyapunov exponents. Without loss of generality, we suppose that the orders $\prec_\pp$ and $\leq$ on $D$ are equivalent, i.e. that $\chi_1(\pp) < \chi_2(\pp) < \ldots < \chi_d(\pp)$.

By perturbing the measure $\pp$, we may assume that $\pp(\aa) > 0$ for all $\aa\in E$. Since $\Lambda_\Phi$ is irreducible with respect to $\pp$, this implies that
\begin{equation}
\label{irredh}
\text{h}_\pp(I_{\leq i}\given I_{<i}) \df \int \log\frac{\pp([\aa]_{I_{<i}})}{\pp([\aa]_{I_{\leq i}})}\;\dee\pp(\aa) > 0
\end{equation}
for all $i\in D$. Here $[\aa]_I = \bigcap_{j\in I} [\aa]_j = \{\bb\in E : b_j = a_j \all j\in I\}$.

Now fix $\epsilon > 0$ and $N\in\N$, and let $S = S_N \subset E^N$ be the set of all words $\omega\in E^N$ satisfying
\begin{equation}
\label{lyapiterate}
(1 - \epsilon) N \chi_i(\pp) \leq -\log|\phi_{\omega,i}'| = \sum_{j = 1}^N -\log|\phi_{\omega_j,i}| \leq (1 + \epsilon) N \chi_i(\pp)\\
\end{equation}
and
\begin{equation}
\label{entropyiterate}
\log\frac{\mu([\omega]_{I_{<i}})}{\mu([\omega]_{I_{\leq i}})}
= \sum_{j = 1}^N \log\frac{\pp([\omega_j]_{I_{<i}})}{\pp([\omega_j]_{I_{\leq i}})}
\geq (1 - \epsilon) N \text{h}_\pp(I_{\leq i}\given I_{<i}),
\end{equation}
where $\mu = \mu_N = \pp^N$. Here the notation is slightly different from in \eqref{notations}:
\begin{align*}
[\omega]_I &= \{\tau\in E^N : \tau_n \in [\omega_n]_I \all n\leq N\}.
\end{align*}
By the law of large numbers, we have $\lim_{N\to\infty} \mu_N(S_N) = 1$, so if $N$ is sufficiently large then $\mu(S) \geq 1 - \epsilon$.

Now define a sequence of sets $T_d \supset T_{d - 1} \supset \cdots \supset T_0$ as follows: $T_d = S$, and if $T_i$ is defined then let
\[
T_{i - 1} = \{\omega\in T_i : \mu(T_i\cap [\omega]_{I_{<i}}) \geq \epsilon \mu([\omega]_{I_{<i}})\}.
\]
Letting
\[
\II_i = \{[\aa]_{I_{<i}} : \aa\in E\} = \{\pi_i^{-1}(\aa) : \aa\in \pi_i(E)\},
\]
we have
\begin{align*}
\mu(T_i\butnot T_{i - 1})
&= \sum_{\substack{P\in \II_i \\ \mu(T_i\cap P) < \epsilon \mu(P)}} \mu(T_i\cap P)
\leq \sum_{\substack{P\in \II_i \\ \mu(T_i\cap P) < \epsilon \mu(P)}} \epsilon\mu(P) \leq \epsilon\mu(E^N) = \epsilon,
\end{align*}
so $\mu(T_0) \geq 1 - (d + 1)\epsilon$. In particular, if $\epsilon$ is small enough then $\mu(T_0) > 0$, and in particular $T_0 \neq \emptyset$.

\begin{claim}
\label{claimT0}
For all $\omega\in T_0$, we have $T_0\cap [\omega]_{I_{<i}} = T_i\cap [\omega]_{I_{<i}}$.
\end{claim}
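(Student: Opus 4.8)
\textbf{Proof proposal for Claim \ref{claimT0}.}

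The plan is to prove the two inclusions separately. Since $T_0\subseteq T_i$ (the sequence $T_d\supset T_{d-1}\supset\cdots\supset T_0$ decreases in the index, equivalently it was built from $S$ by successively deleting words), the inclusion $T_0\cap[\omega]_{I_{<i}}\subseteq T_i\cap[\omega]_{I_{<i}}$ is immediate. All the content is in the reverse inclusion: I claim that every $\tau$ lying in $T_i$ and in the same $I_{<i}$-cylinder as a point $\omega\in T_0$ must itself belong to $T_0$.

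To prove this, I would fix $\omega\in T_0$ and $\tau\in T_i\cap[\omega]_{I_{<i}}$ and run a downward induction establishing $\tau\in T_k$ for $k=i,i-1,\ldots,0$. The base case $k=i$ is the hypothesis. For the inductive step, suppose $\tau\in T_k$ with $1\le k\le i$; by the definition of $T_{k-1}$ it suffices to verify the single inequality $\mu\big(T_k\cap[\tau]_{I_{<k}}\big)\ge\epsilon\,\mu\big([\tau]_{I_{<k}}\big)$. The key observation — and essentially the only point one must be careful about — is that $I_{<k}\subseteq I_{<i}$ when $k\le i$, so $\tau\in[\omega]_{I_{<i}}$ forces $\tau$ and $\omega$ to agree on every coordinate in $I_{<k}$, whence $[\tau]_{I_{<k}}=[\omega]_{I_{<k}}$ (both as cylinder sets in $E^N$). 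Since $\omega\in T_0\subseteq T_{k-1}$, the point $\omega$ satisfies the defining condition for membership in $T_{k-1}$, namely $\mu\big(T_k\cap[\omega]_{I_{<k}}\big)\ge\epsilon\,\mu\big([\omega]_{I_{<k}}\big)$; rewriting $[\omega]_{I_{<k}}$ as $[\tau]_{I_{<k}}$ gives exactly the required inequality, so $\tau\in T_{k-1}$.

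Iterating down to $k=0$ yields $\tau\in T_0$, and since $\tau\in[\omega]_{I_{<i}}$ by assumption, this gives $T_i\cap[\omega]_{I_{<i}}\subseteq T_0\cap[\omega]_{I_{<i}}$, hence equality. (When $i=1$ the cylinder $[\omega]_{I_{<1}}$ is all of $E^N$ and the statement degenerates to $T_0=T_1$, which holds because $T_0\ni\omega$ is nonempty and therefore $\mu(T_1)\ge\epsilon$; this is just the $k=1$ case of the general step and needs no separate argument.) I do not expect a genuine obstacle here: once the bookkeeping remark is isolated — that membership in $T_{k-1}$ depends on $\omega$ only through the cylinder $[\omega]_{I_{<k}}$, and is therefore inherited by every element of $T_k$ in that cylinder — the induction is automatic.
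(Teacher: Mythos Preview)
Your proof is correct and follows essentially the same approach as the paper: a downward induction on $k$ from $i$ to $0$, using the key observation that $[\tau]_{I_{<k}}=[\omega]_{I_{<k}}$ for $k\le i$ (since $I_{<k}\subseteq I_{<i}$) together with $\omega\in T_0\subseteq T_{k-1}$ to transfer the defining inequality from $\omega$ to $\tau$. The paper's argument is the same, just with slightly terser bookkeeping and without the parenthetical on the $i=1$ case.
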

\begin{subproof}
Fix $\tau\in T_i\cap [\omega]_{I_{<i}}$; we will prove by backwards induction that $\tau\in T_j$ for all $j = i,i - 1,\ldots,0$. Fix $j\leq i$, and suppose that $\tau\in T_j$. Since $\tau \in [\omega]_{I_{<i}} \subset [\omega]_{I_{<j}}$, we have $[\tau]_{I_{<j}} = [\omega]_{I_{<j}}$. Since $\omega \in T_0 \subset T_{j - 1}$, this shows that $\mu(T_i\cap [\tau]_{I_{<i}}) \geq \epsilon \mu([\tau]_{I_{<i}})$, and thus $\tau\in T_{j - 1}$.
\end{subproof}

Note that in this claim, it was crucial that we defined the sequence $(T_i)_0^d$ by backward recursion rather than by forward recursion, since we needed the fact that $[\omega]_{I_{<i}} \subset [\omega]_{I_{<j}}$ for all $j \leq i$. Combining Claim \ref{claimT0} with the definition of $(T_i)_0^d$ yields
\begin{equation}
\label{T0}
\mu(T_0\cap [\omega]_{I_{<i}}) \geq \epsilon \mu([\omega]_{I_{<i}}) \all i\in D \all \omega\in T_0.
\end{equation}
Now let $\tau\in E^* = \bigcup_{n=0}^\infty E^n$ be a word of fixed length (independent of $N$) such that
\begin{equation}
\label{taudef}
\phi_\tau([0,1]^d) \subset (0,1)^d,
\end{equation}
and consider the diagonal IFS $\Psi = \Psi_N = (\psi_\omega)_{\omega\in T_0}$, where for each $\omega\in T_0$, we write
\[
\psi_\omega = \phi_\omega\circ \phi_\tau.
\]
Clearly, $\Lambda_\Psi \subset \Lambda_\Phi$. To bound $\underline{\AD}(\Lambda_\Psi)$ from below using Theorem \ref{theoremassdimLG}, we fix $i\in D$ and $\aa\in \pi_i(T_0)$. We have
\begin{align*}
\dim(\Psi_{i,\aa})
&\geq \frac{\log\#(E_{i,\aa})}{\displaystyle\max_{b\in E_{i,\aa}} (-\log|\psi_{i,b}'|)}
\end{align*}
where the ``fiber IFS'' $\Psi_{i,\aa} = (\psi_b)_{b\in E_{i,\aa}}$ is as in \eqref{Phiia}. Now by \eqref{lyapiterate},
\[
\max_{b\in E_{i,\aa}} (-\log|\psi_{i,b}'|) \leq (1 + \epsilon) N \chi_i(\pp) + (-\log|\phi_{\tau,i}'|),
\]
and on the other hand
\begin{align*}
\#(E_{i,\aa}) &\geq \frac{\mu(T_0\cap \pi_i^{-1}(\aa))}{\max_{b\in E_{i,\aa}} \mu(T_0\cap \pi_{i + 1}^{-1}(\aa,b))}\\
&\geq \frac{\epsilon\mu(\pi_i^{-1}(\aa))}{\max_{b\in E_{i,\aa}} \mu(\pi_{i + 1}^{-1}(\aa,b))} \by{\eqref{T0}}\\
&\geq \epsilon\exp\big((1 - \epsilon) N \text{h}_\pp(I_{\leq i}\given I_{<i})\big). \by{\eqref{entropyiterate}}
\end{align*}
So
\begin{equation}
\label{dimfiber}
\dim(\Psi_{i,\aa})
\geq \delta_i(N,\epsilon) \df \frac{(1 - \epsilon) N \text{h}_\pp(I_{\leq i}\given I_{<i}) + \log(\epsilon)}{(1 + \epsilon) N \chi_i(\pp) - \log|\phi_{\tau,i}'|}
\end{equation}
and thus by Theorem \ref{theoremassdimLG},
\[
\underline{\AD}(\Psi_N) \geq \sum_{i\in D} \delta_i(N,\epsilon) \tendsto{N\to\infty} \frac{1 - \epsilon}{1 + \epsilon} \sum_{i\in D} \frac{\text{h}_\pp(I_{\leq i}\given I_{<i})}{\chi_i(\pp)} \tendsto{\epsilon \to 0} \sum_{i\in D} \frac{\text{h}_\pp(I_{\leq i}\given I_{<i})}{\chi_i(\pp)}\cdot
\]
The right-hand side is equal to $\HD(\nu_\pp)$ by the Ledrappier--Young formula (e.g. \cite[Proposition 2.16]{DasSimmons1}). So to complete the proof, we need to show that the sponge $\Lambda_{\Psi_N}$ is strongly Lalley--Gatzouras and hyperplane diffuse for all $N$ sufficiently large. The coordinate ordering condition follows from \eqref{lyapiterate} and the fact that $\pp$ has distinct Lyapunov exponents. Since $\pp$ is good, so is $\Lambda_{\Psi_N}$, and \eqref{taudef} implies that $\Lambda_{\Psi_N}$ is in fact strongly good. Finally, combining \eqref{irredh} with the calculation preceding \eqref{dimfiber} shows that
\[
\min_{i\in D} \min_{\aa\in \pi_i(E)} \#(E_{i,\aa}) \geq 2
\]
for all $N$ sufficiently large, and it is easy to check that this is equivalent to $\Psi_N$ being uniformly irreducible. So by Proposition \ref{propositiondiffuse}, $\Lambda_{\Psi_N}$ is hyperplane diffuse for all $N$ sufficiently large.

\section{Open questions}

We conclude with a list of open questions.

\begin{question}
Let $\Lambda_\Phi \subset [0,1]^d$ be an irreducible Bara\'nski sponge with distinguishable coordinates, such that $\DynD(\Lambda_\Phi) < \HD(\Lambda_\Phi)$ (cf. \cite{DasSimmons1}). Does $\Lambda_\Phi \cap \BA_d$ necessarily have full Hausdorff dimension in $\Lambda_\Phi$ (as opposed to just full dynamical dimension as guaranteed by Theorem \ref{maintheorem})? Alternatively, can it be shown that it is impossible to prove this using the techniques of this paper, by showing that $\Lambda_\Phi$ does not necessarily have hyperplane diffuse subsets of sufficiently large lower Assouad dimension?
\end{question}

\begin{question}
Can Schmidt's game be used to show that $\BA_d$ has full dimension in some fractal defined by a dynamical system which is both non-conformal and nonlinear?
\end{question}

\begin{question}
Is there any fractal $\Lambda$ defined by a smooth dynamical system (e.g. the limit set of a $C^1$ IFS) such that $\Lambda\cap \BA_d$ is nonempty but does not have full dimension in $\Lambda$?
\end{question}

\appendix
\section{Equivalent formulas for the upper and lower Assouad dimensions}
\label{sectionassouad}

In this paper we have used the formulas
\begin{align*}
\underline\AD(K)
= \underline\delta_1
&\df \liminf_{\beta \to 0} \inf_{0 < \rho \leq 1} \inf_{\xx\in K} \frac{\log N_{\beta\rho}(B(\xx,\rho)\cap K)}{-\log(\beta)}\\
= \underline\delta_2
&\df \liminf_{\beta \to 0} \liminf_{\rho \to 0} \inf_{\xx\in K} \frac{\log N_{\beta\rho}(B(\xx,\rho)\cap K)}{-\log(\beta)}\\
\overline\AD(K)
= \overline\delta_1
&\df \limsup_{\beta \to 0} \sup_{0 < \rho \leq 1} \sup_{\xx\in K} \frac{\log N_{\beta\rho}(B(\xx,\rho)\cap K)}{-\log(\beta)}\\
= \overline\delta_2
&= \limsup_{\beta \to 0} \limsup_{\rho \to 0} \sup_{\xx\in K} \frac{\log N_{\beta\rho}(B(\xx,\rho)\cap K)}{-\log(\beta)}
\end{align*}
for the lower and upper Assouad dimension, respectively. It is clear from the definitions that $\underline\AD(K) = \underline\delta_1$ and $\overline\AD(K) = \overline\delta_1$, but it is less clear that $\underline\delta_1 = \underline\delta_2$ and $\overline\delta_1 = \overline\delta_2$, so we prove this now. For brevity we only prove the equality $\underline\delta_1 = \underline\delta_2$, as the proof of the equality $\overline\delta_1 = \overline\delta_2$ is similar.

Obviously $\underline\delta_1\leq\underline\delta_2$, so we fix $s < \underline\delta_2$, and we will show that $\underline\delta_1 \geq s$. Choose $0 < \beta \leq 1$ small enough so that
\[
\liminf_{\rho\to 0} \inf_{\xx\in K} \frac{\log N_{\beta\rho}(B(\xx,\rho)\cap K)}{-\log(\beta/4)} > s,
\]
and then choose $\rho_0 > 0$ small enough so that
\[
\inf_{\xx\in K} \frac{\log N_{\beta\rho}(B(\xx,\rho)\cap K)}{-\log(\beta/4)} \geq s \all 0 < \rho \leq \rho_0.
\]
Then every ball $B(\xx,\rho)$ centered at a point in $K$ of radius $\leq \rho_0$ contains a $\beta\rho$-separated set of cardinality at least $(\beta/4)^{-s}$. Now the balls of radius $\beta\rho/2$ centered at the points of this set are disjoint and contained in $B(\xx,2\rho)$. Letting $\kappa = 2\rho$, we see that every ball $B(\xx,\kappa)$ with $\xx\in K$ and $\kappa \leq 2\rho_0$ contains at least $(\beta/4)^{-s}$ disjoint balls of radius $(\beta/4) \kappa$. Iterating, every such ball contains at least $(\beta/4)^{-ns}$ disjoint balls of radius $(\beta/4)^n \kappa$. Now fix $\xx\in K$ and $0 < r \leq R \leq 1$. Let $r' = \min(\rho_0,r)$ and $R' = \min(\rho_0,R)$, and let $n\geq 0$ be chosen so that $(\beta/4)^n \geq r'/R'$ but $(\beta/4)^n \asymp r'/R' \asymp r/R$. Then by the above argument, $B(\xx,R') \subset B(\xx,R)$ contains at least $(\beta/4)^{-ns} \asymp (r/R)^{-s}$ disjoint balls of radius $(\beta/4)^n R' \geq r'$. If $r' = r$, this shows that $N_r(B(\xx,R)) \gtrsim (R/r)^s$, and if $r' < r$, then $r > \rho_0$ and thus $N_r(B(\xx,R)) \geq 1 \asymp (R/r)^s$. By the definition of the lower Assouad dimension, this implies that $\underline\delta_1 = \underline\AD(K) \geq s$.

\bibliographystyle{amsplain}

\bibliography{bibliography}

\end{document}